\numberwithin{equation}{section}
\newtheorem{theoremcounter}{theoremcounter}[section]
\theoremstyle{plain}
\newtheorem{corollary}[theoremcounter]{Corollary}
\newtheorem{lemma}[theoremcounter]{Lemma}
\newtheorem{proposition}[theoremcounter]{Proposition}
\newtheorem{theorem}[theoremcounter]{Theorem}
\theoremstyle{definition}
\newtheorem{definition}[theoremcounter]{Definition}
\newtheorem{remark}[theoremcounter]{Remark}
\newcommand{\bZ}{\mathbb{Z}}
\newcommand{\bbm}{\begin{bmatrix}}
	\newcommand{\ebm}{\end{bmatrix}}
\newcommand{\authors}{Baiying Ren}
\renewcommand{\title}{Higher Kazhdan projections and delocalized $\ell^2$-Betti numbers for an amalgamated product group}
\begin{document}
\titleformat{\section}
  {\normalfont\Large\bfseries}{\thesection}{0.5em}{}
\titleformat{\subsection}
  {\normalfont\large\bfseries}{\thesubsection}{0.4em}{}

    \pagestyle{plain}
	
	\noindent
	\begin{minipage}{\linewidth}
		\begin{center}
			\textbf{\Large \title} \\
			\authors    
		\end{center}
	\end{minipage}
	
	\renewcommand{\thefootnote}{}
	\footnotetext{
		\textit{MSC classification: 46L80, 20F65, 20J05, 20E06}
	}
	\footnotetext{%
		\textit{Keywords: higher Kazhdan projections, delocalized $\ell^2$-Betti numbers, amalgamated product groups }
	}
	
	\vspace{2em}
	\noindent
	\begin{minipage}{\linewidth}
		\textbf{Abstract}. 
        We establish explicit expressions for the $K$-theory classes of higher Kazhdan projections for amalgamated product groups $\bZ_m*_{\bZ_d}\bZ_n$.
        Our approach follows the methodology developed by Pooya and Wang for free product groups $\bZ_m*\bZ_n$, and naturally generalizes their results on free products.
        As an application of the $K$-class expressions, we obtain non-vanishing results for delocalized $\ell^2$-Betti numbers of $\mathrm{SL}(2,\bZ)$. 
	\end{minipage}

\section{Introduction}
\label{section1}
In operator algebras and group theory, Kazhdan projections are specific projections constructed for a group $G$ with Kazhdan's property $(T)$.
They lie in the maximal group $C^*$-algebra $\mathrm{C^*_{max}}(G)$ and are well-known to play a role in constructing counterexamples to the coarse Baum-Connes conjecture \cite{Higson} and to the Baum-Connes conjecture with coefficients \cite[Section 5]{HigsonLafforgueSkandalis}. 
In \cite{LNP}, Li, Nowak, and Pooya generalized the Kazhdan projections to higher degrees, establishing a relation between $\ell^2$-Betti numbers for groups and the surjectivity of Baum–Connes type assembly maps.
To be more precise, the Kazhdan projection for a discrete group $G$ in degree $n$, associated with a family of unitary representations $\varPsi$, is a certain projection $p_n$ in the matrix algebra over $\mathrm{C^*_{\varPsi}}(G)$. 
The Kazhdan projection $p_n$ is required to satisfy that for each representation $(\pi,H)\in \varPsi$, $\pi(p_n)$ is the orthogonal projection onto the space of harmonic $n$-cochains with coefficients in $(\pi,H)$.
Especially, when $G$ is a property $(T)$ group and $\varPsi$ is the family of all its unitary representations, the associated projection $p_0\in \mathrm{C^*_{max}}(G)$ exists and coincides with the classical Kazhdan projection.

When $\varPsi$ is the left regular representation of $G$, the higher Kazhdan projections $p_n$ can give rise to non-zero $K$-classes of the reduced group $C^*$-algebra $\mathrm{C^*_{r}}(G)$, that is, 
$$[p_n]\in \mathrm{K_0(C^*_r}(G)), \quad n=0,1,\cdots.$$
Notably, these $K$-classes provide a connection between higher Kazhdan projections and fundamental topological invariants, specifically $\ell^2$-Betti numbers and their delocalized counterparts, via trace maps.
On the one hand, by \cite[Proposition 7]{LNP}, the pairing of $[p_n]$ with the canonical von Neumann trace $\tau$ on $\mathrm{C^*_r}(G)$ yields the $n$-th $\ell^2$-Betti number $\beta ^{(2)}_n(G)$. (We refer to \cite{Lueck} for a comprehensive introduction to $\ell^2$-Betti numbers.)
On the other hand, Lott \cite{Lott} introduced delocalized $\ell^2$-Betti numbers for discrete groups $G$, extending his earlier definition for closed Riemannian manifolds.
When $G$ is a hyperbolic group, Puschnigg \cite{Puschnigg} further extended the delocalized traces $\tau_{\langle g\rangle}$ from $\ell^1(G)$ to a specific smooth subalgebra of $\mathrm{C^*_r}(G)$, which ensures that the pairing between $\mathrm{K_0(C^*_r}(G))$ and $\tau_{\langle g\rangle}$ is well-defined.
Building on this framework, Pooya and Wang \cite{Pooya-Wang} demonstrated that for a hyperbolic group $G$, if $p_n$ exists in the matrix algebra over $\mathrm{C^*_r}(G)$, then the pairing of $[p_n]\in \mathrm{K_0(C^*_r}(G))$ with the delocalized trace $\tau_{\langle g\rangle}$ recovers the delocalized $\ell^2$-Betti number $\beta ^{(2)}_{n, \langle{g}\rangle}(G)$. 
This allowed them to propose a $K$-theoretic definition of delocalized $\ell^2$-Betti numbers for a specific class of groups (see \cite[Definition 2.11]{Pooya-Wang}) and establish non-vanishing calculations through this formulation.


However, explicit calculation of $K$-theory classes $[p_n]\in \mathrm{K_0(C^*_r}(G))$ is quite challenging.
Nevertheless, Pooya and Wang \cite{Pooya-Wang} established an expression for the class $[p_1]$ of the first higher Kazhdan projections for free product groups $G=\bZ_m*\bZ_n$ by direct computation. (Their results on Cartesian product groups are beyond the scope of discussion in this paper.)
Due to Lemma 4.1 in \cite{Pooya-Wang}, $p_1$ is also the only non-vanishing Kazhdan projection for 
$G$ since $G$ is virtually free.
In the following, we recall their $K$-class expressions for free products and their calculations of delocalized $\ell^2$-Betti numbers, particularly for $G=\mathrm{PSL}(2,\bZ)=\bZ_2*\bZ_3$.
\begin{theorem}[\cite{Pooya-Wang}]
\label{PW}
    The $K$-theory class of the first higher Kazhdan projection for $$G=\bZ_m*\bZ_n=\langle s,t|s^m=1,t^n=1 \rangle$$ with $m\geq 2$ and $n\geq 3$, can be represented by 
  	\begin{equation*}
  		[p_1]=[1]-[\frac{\sum_{i=0}^{m-1}s^i}{m}]-[\frac{\sum_{i=0}^{n-1}t^i}{n}]\in \mathrm{K_0(C^*_r}(G)).
  	\end{equation*}
\end{theorem}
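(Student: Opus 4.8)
The plan is to realise $p_1$ as an explicit spectral projection coming from the Bass--Serre tree of $G$ and to read off its $K$-class from a Hodge-type decomposition of the $\ell^2$-cochain complex. Recall that $G=\bZ_m*\bZ_n$ acts on its Bass--Serre tree $T$, the $(m,n)$-biregular tree, with two vertex orbits $G/\bZ_m$ and $G/\bZ_n$ and a single free edge orbit $E\cong G$. Thus the simplicial $\ell^2$-cochain complex of $T$ is concentrated in degrees $0$ and $1$,
$$0\longrightarrow C^0\xrightarrow{\ \delta\ }C^1\longrightarrow 0,\qquad C^0=\ell^2(G/\bZ_m)\oplus\ell^2(G/\bZ_n),\quad C^1=\ell^2(G),$$
with $\delta$ the usual coboundary $f\mapsto\bigl(e\mapsto f(\operatorname{head}e)-f(\operatorname{tail}e)\bigr)$. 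Under the standard identification $\ell^2(G/H)\cong\ell^2(G)e_H$, where $e_H=\tfrac1{|H|}\sum_{h\in H}h$, one gets $[C^0]=\bigl[\tfrac{\sum_{i=0}^{m-1}s^i}{m}\bigr]+\bigl[\tfrac{\sum_{i=0}^{n-1}t^i}{n}\bigr]$ and $[C^1]=[1]$ in $K_0(\mathrm{C^*_r}(G))$. Since $T$ has no $2$-cells the complex stops at degree $1$, so the space of harmonic $1$-cochains is exactly $\ker(\delta^*)=(\operatorname{im}\delta)^\perp\subseteq C^1$, and by definition $p_1$ is the orthogonal projection of $C^1$ onto it; equivalently $1-p_1$ is the range projection of $\delta$.

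The analytic core is to show that $\delta$ is injective with closed range. Injectivity is immediate since $\ker\delta$ consists of the functions constant along the connected infinite tree $T$, of which there is no nonzero one in $\ell^2$ (this is $\beta_0^{(2)}(G)=0$). For closedness of the range I identify $\delta^*\delta$ with the combinatorial $0$-Laplacian $\Delta_0=D-A$ on $T$, with $D$ the diagonal degree operator and $A$ the adjacency operator, and estimate its bottom. Writing $\Delta_0=D^{1/2}\bigl(\mathrm{id}-D^{-1/2}AD^{-1/2}\bigr)D^{1/2}$ and noting $D^{-1/2}AD^{-1/2}=\tfrac1{\sqrt{mn}}A$, the classical value $\|A\|=\sqrt{m-1}+\sqrt{n-1}$ for the adjacency operator of the $(m,n)$-biregular tree gives $\|D^{-1/2}AD^{-1/2}\|=\tfrac{\sqrt{m-1}+\sqrt{n-1}}{\sqrt{mn}}$, which is strictly less than $1$ exactly when $(m,n)\neq(2,2)$ (indeed $\sqrt{mn}-\sqrt{m-1}-\sqrt{n-1}>0$ is equivalent, after squaring, to $(\sqrt{(m-1)(n-1)}-1)^2>0$). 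Hence $\Delta_0\geq c\cdot\mathrm{id}$ for some $c>0$ whenever $m\geq2$, $n\geq3$, so $\delta$ is bounded below; consequently $\delta\delta^*$ has spectrum in $\{0\}\cup[c,\infty)$, $p_1=\chi_{\{0\}}(\delta\delta^*)\in\mathrm{C^*_r}(G)$, and $v:=\delta(\delta^*\delta)^{-1/2}$ is a partial isometry in $\operatorname{Hom}_{\mathrm{C^*_r}(G)}(C^0,C^1)$ with $v^*v=\mathrm{id}_{C^0}$ and $vv^*=1-p_1$.

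It then follows from Murray--von Neumann equivalence that $[1-p_1]=[vv^*]=[v^*v]=[C^0]$ in $K_0(\mathrm{C^*_r}(G))$, i.e.
$$[p_1]=[1]-[C^0]=[1]-\Bigl[\tfrac{\sum_{i=0}^{m-1}s^i}{m}\Bigr]-\Bigl[\tfrac{\sum_{i=0}^{n-1}t^i}{n}\Bigr],$$
and one checks consistency by pairing with the canonical trace: $\tau([p_1])=1-\tfrac1m-\tfrac1n=\beta_1^{(2)}(G)$. I expect the spectral-gap estimate for $\Delta_0$ to be the step needing the most care: it is precisely what makes $p_1$ a member of $\mathrm{C^*_r}(G)$ (and not merely of the group von Neumann algebra) and thus what lets the polar-decomposition computation take place in $K_0(\mathrm{C^*_r}(G))$. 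A secondary point to watch is the normalisation in $\ell^2(G/H)\cong\ell^2(G)e_H$, so that the idempotents appearing really are $\tfrac1m\sum s^i$ and $\tfrac1n\sum t^i$ and not conjugates of them.
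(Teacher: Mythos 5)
Your analytic work is essentially sound (injectivity of $\delta$, the norm $\|A\|=\sqrt{m-1}+\sqrt{n-1}$ for the $(m,n)$-biregular tree, the resulting gap which indeed fails only at $(m,n)=(2,2)$, and the Murray--von Neumann step via $v=\delta(\delta^*\delta)^{-1/2}$), but there is a genuine gap at the very first move: the projection you build is not, by the definition in force here, the higher Kazhdan projection $p_1$. In \cite{LNP} (and in Pooya--Wang, and in this paper) $p_1$ is attached to the cochain complex obtained by applying $\mathrm{Hom}_{\bZ G}(\cdot,\ell^2(G))$ to a \emph{free} resolution coming from a CW-model of $BG$ with finite $2$-skeleton --- concretely the presentation complex, with $d_0$ the column $(1-s,1-t)$, $d_1$ the Fox-derivative matrix, and $\Delta_1=d_0d_0^*+d_1^*d_1\in\mathrm{M}_2(\bZ G)$ --- so $p_1$ is the projection in $\mathrm{M}_2(\mathrm{C^*_r}(G))$ onto $\ker d_0^*\cap\ker d_1$. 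The Bass--Serre tree is not such a model: its vertex stabilizers $\bZ_m,\bZ_n$ are nontrivial, its degree-$0$ cochain module is the projective-but-not-free module $\ell^2(G)p\oplus\ell^2(G)q$, it has no $2$-cells, and your candidate projection sits in $\mathrm{C^*_r}(G)$ itself. Hence the sentence ``by definition $p_1$ is the orthogonal projection of $C^1$ onto $\ker\delta^*$'' is exactly the unproved step: you must show that replacing the free resolution by the tree's projective resolution does not change the $K_0$-class. This is fixable --- over $\mathbb{C}G$ both augmented complexes are projective resolutions of the trivial module (the averaging idempotents $\frac1m\sum_i s^i$, $\frac1n\sum_j t^j$ split the vertex modules), hence chain homotopy equivalent via maps with entries in $\mathbb{C}G$; applying $\mathrm{Hom}(\cdot,\ell^2(G))$ and using the spectral gaps on both sides, the two harmonic spaces become isomorphic finitely generated projective Hilbert $\mathrm{C^*_r}(G)$-modules, so the classes agree --- but that comparison must be carried out; checking that the canonical traces match is only a consistency test and does not identify $K$-classes.

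Once that identification is supplied, your route is genuinely different from the paper's and has its own merits. The paper (following Pooya--Wang and generalizing to $\bZ_m*_{\bZ_d}\bZ_n$) stays with the presentation-complex resolution, computes $\ker\Delta_1$ explicitly (Lemma \ref{lemma:ker}) and reads off $[p_1]$ from the two orthogonal decompositions of $\mathrm{im}\,h\oplus\mathrm{im}\,h$ in Lemma \ref{lemma1}; your argument replaces the kernel computation by the $0$-Laplacian gap on the tree plus a polar decomposition, and the gap simultaneously shows that $H^1(G,\lambda)$ is reduced, i.e.\ that $p_1$ exists in the reduced $C^*$-algebra, which the paper obtains instead from virtual freeness via Lemma \ref{lemma: only p1 nonvainishing}.
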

Throughout this paper, given an element $g\in G$, we denote by $\langle{g}\rangle$ its conjugacy class.
\begin{theorem}[\cite{Pooya-Wang}] The delocalized $\ell^2$-Betti numbers for
$$G=\mathrm{PSL}(2,\bZ)=\bZ_2*\bZ_3=\langle s,t|s^2=1,t^3=1 \rangle$$ 
are given by 
\begin{equation*}
			\beta^{(2)}_{1, \langle{g}\rangle}(G) = 
			\begin{cases}
				1/6 &\qquad  g=1\\
                -1/2  &\qquad  g\in\langle{s}\rangle\\
				-1/3  &\qquad  g\in\langle{t}\rangle\ \mathrm{or} \ \langle{t^2}\rangle\\ 
				0    &\qquad \text{otherwise} 
			\end{cases}
		\end{equation*}
		and $\beta ^{(2)}_{k, \langle{g}\rangle}(G) = 0$ for $k \neq 1$ and $g\in G$.
    
\end{theorem}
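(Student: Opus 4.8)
The plan is to combine Theorem~\ref{PW} with the trace-pairing principle of Pooya and Wang. Since $G=\mathrm{PSL}(2,\bZ)$ is virtually free, \cite[Lemma 4.1]{Pooya-Wang} tells us that $p_1$ is the only possibly non-vanishing higher Kazhdan projection for $G$; hence $\beta^{(2)}_{k,\langle g\rangle}(G)=0$ for every $k\neq 1$ and every $g\in G$, and only the case $k=1$ requires work. For $k=1$ the input I will use is the identity $\beta^{(2)}_{1,\langle g\rangle}(G)=\tau_{\langle g\rangle}(p_1)$, valid because $G$ is hyperbolic and, by Theorem~\ref{PW}, $[p_1]$ is represented by the formal difference $[1]-[\tfrac{1+s}{2}]-[\tfrac{1+t+t^2}{3}]$ of projections lying in the group algebra $\mathbb{Q}[G]$, which sits inside Puschnigg's holomorphically closed smooth subalgebra of $\mathrm{C^*_r}(G)$ on which each delocalized trace $\tau_{\langle g\rangle}$ is defined and continuous and whose pairing with $\mathrm{K_0}$ is well defined.

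With this in hand the problem reduces to evaluating $\tau_{\langle g\rangle}$ on the three explicit projections. On the group algebra the delocalized trace is $\tau_{\langle g\rangle}\bigl(\sum_{h}a_h h\bigr)=\sum_{h\in\langle g\rangle}a_h$, so for each $g$ I only need to decide which of the finitely many elements $1,s,t,t^2$ occurring in the representatives lie in $\langle g\rangle$. Here I will invoke the standard description of conjugacy in a free product $A*B$: an element of a free factor is conjugate in $A*B$ only to elements of the same factor, and two elements of the same factor are conjugate in $A*B$ if and only if they are already conjugate in that factor. As $\bZ_2$ and $\bZ_3$ are abelian, this yields $s\not\sim 1$, $t\not\sim 1$, $t\not\sim t^2$, pairwise non-conjugacy of $s,t,t^2$, and the fact that no nontrivial torsion element is conjugate to $1$.

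Carrying out the bookkeeping: for $g=1$ one uses $\tau_{\langle 1\rangle}=\tau$ (the canonical trace) and gets $\tau(1)-\tau(\tfrac{1+s}{2})-\tau(\tfrac{1+t+t^2}{3})=1-\tfrac12-\tfrac13=\tfrac16$. For $g\in\langle s\rangle$ only the middle term contributes, through its $s$-coefficient $\tfrac12$, giving $0-\tfrac12-0=-\tfrac12$. For $g\in\langle t\rangle$ (respectively $g\in\langle t^2\rangle$) only the last term contributes, through its $t$-coefficient (respectively $t^2$-coefficient) $\tfrac13$, giving $0-0-\tfrac13=-\tfrac13$. For every remaining conjugacy class, none of $1,s,t,t^2$ lies in it, so all three traces vanish and $\beta^{(2)}_{1,\langle g\rangle}(G)=0$. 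This exhausts all cases.

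Granting the results recalled above from \cite{LNP, Pooya-Wang, Puschnigg}, the argument is essentially a finite computation, and the only genuinely delicate point is the conjugacy analysis in $\bZ_2*\bZ_3$ — in particular the non-conjugacy of $t$ and $t^2$ and of $s$ and $1$, which is exactly what forces the asymmetry between the $\langle t\rangle$, $\langle t^2\rangle$ values and distinguishes them from the identity contribution. If one instead wished to prove the theorem without Theorem~\ref{PW}, the main obstacle would be relocated to establishing that $\bZ[G]$-representative and the identity $\tau_{\langle g\rangle}(p_1)=\beta^{(2)}_{1,\langle g\rangle}(G)$; but since both are available to us, nothing further is needed.
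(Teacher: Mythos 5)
Your proposal is correct and follows essentially the same route as the paper (and the cited source): pair the representative $[p_1]=[1]-[\tfrac{1+s}{2}]-[\tfrac{1+t+t^2}{3}]$ from Theorem~\ref{PW} with the delocalized traces $\tau_{\langle g\rangle}$ extended to Puschnigg's smooth subalgebra, use the free-product conjugacy analysis to read off the coefficients, and dispose of $k\neq 1$ via \cite[Lemma 4.1]{Pooya-Wang}. This is exactly how the paper derives its analogous corollary for $\bZ_m*_{\bZ_d}\bZ_n$ from Theorem~\ref{main th}, so nothing further is needed.
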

In this paper, we aim to extend the above $K$-class expressions  $[p_1]\in \mathrm{K_0(C^*_r}(G))$ to specific amalgamated product groups and use these to obtain more examples of delocalized $\ell^2$-Betti numbers.



Our main result is the generalization of Theorem \ref{PW} to amalgamated product groups $G=\bZ_m*_{\bZ_d}\bZ_n$, presented as follows.
It is easy to see that Theorem \ref{PW} is a special case of Theorem \ref{main th} when $d=1$.
\begin{theorem}
\label{main th} 
   The $K$-theory class of the first higher Kazhdan projection for $$G=\bZ_m*_{\bZ_d}\bZ_n=\langle s,t|s^m=1,t^n=1,r:=s^{\frac{m}{d}}=t^{\frac{n}{d}} \rangle$$ 
   with $d|m$, $d|n$, $d\neq m$, $d\neq n$, $m \geq 2$ and $n\geq 3$, can be represented by
  	\begin{equation*}
  		[p_1]=[\frac{\sum_{i=0}^{d-1}r^i}{d}]-[\frac{\sum_{i=0}^{m-1}s^i}{m}]-[\frac{\sum_{i=0}^{n-1}t^i}{n}]\in \mathrm{K_0(C^*_r}(G)).
  	\end{equation*}
  \end{theorem}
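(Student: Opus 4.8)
The plan is to follow the Pooya--Wang strategy for free products, adapting each step to the presence of the amalgamating subgroup $\bZ_d = \langle r\rangle$. First I would recall the Bass--Serre tree $T$ associated to the amalgamated product $G=\bZ_m*_{\bZ_d}\bZ_n$, on which $G$ acts with vertex stabilizers conjugate to $\bZ_m$ and $\bZ_n$ and edge stabilizers conjugate to $\bZ_d$. The simplicial chain complex of $T$ gives a short exact sequence of $\bZ G$-modules
\begin{equation*}
0 \longrightarrow C_1(T) \longrightarrow C_0(T) \longrightarrow \bZ \longrightarrow 0,
\end{equation*}
where $C_1(T)\cong \bZ[G/\bZ_d]$ and $C_0(T)\cong \bZ[G/\bZ_m]\oplus \bZ[G/\bZ_n]$. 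Tensoring with $\ell^2 G$ (equivalently, passing to the associated complex of finitely generated projective $\mathrm{C^*_r}(G)$-modules, using that the relevant averaging elements $\tfrac1d\sum r^i$, $\tfrac1m\sum s^i$, $\tfrac1n\sum t^i$ are projections) realizes $\ell^2 G/\bZ_d$, $\ell^2 G/\bZ_m$, $\ell^2 G/\bZ_n$ as the images of these projections acting on $\ell^2 G$. This is where the class $[\tfrac1d\sum_{i=0}^{d-1}r^i]$ enters in place of the $[1]$ that appeared for free products: when $d=1$ the edge module is just $\ell^2 G$ and one recovers Theorem \ref{PW}.

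Next I would identify $p_1$ with the projection onto harmonic $1$-cochains for the left regular representation. Because $G$ is virtually free (it acts on the tree $T$ with finite stabilizers), its rational cohomological dimension is $1$, so $p_n = 0$ for $n\geq 2$ by the argument of \cite[Lemma 4.1]{Pooya-Wang}, and the only interesting projection is $p_1$. The space of $\ell^2$-harmonic $1$-cochains is the reduced $\ell^2$-cohomology $\overline{H}^1_{(2)}(G)$, which by the above tree complex sits inside $\ell^2 C^1(T) = \ell^2(G/\bZ_d)$. I would compute the Euler characteristic of the complex in $K$-theory: the alternating sum of the classes of the chain modules is
\begin{equation*}
[\ell^2(G/\bZ_m)] + [\ell^2(G/\bZ_n)] - [\ell^2(G/\bZ_d)] = [\tfrac{1}{m}\textstyle\sum s^i] + [\tfrac1n\sum t^i] - [\tfrac1d\sum r^i],
\end{equation*}
and this must equal $[\overline{H}^0_{(2)}(G)] - [\overline{H}^1_{(2)}(G)] = [0] - [p_1]$ since $G$ is infinite (so $\overline{H}^0_{(2)}(G)=0$, $\beta_0^{(2)}(G)=0$). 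Rearranging gives exactly the claimed formula for $[p_1]$.

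The two points requiring genuine care are: (a) establishing that the tree chain complex, after tensoring, is a complex of \emph{finitely generated projective} Hilbert $\mathrm{C^*_r}(G)$-modules with the stated $K$-classes — i.e. that $\ell^2(G/H)\cong e_H\ell^2 G$ as a $\mathrm{C^*_r}(G)$-module with $[e_H\ell^2G] = [\tfrac{1}{|H|}\sum_{h\in H} h]$ for each finite subgroup $H\in\{\langle r\rangle,\langle s\rangle,\langle t\rangle\}$, and that the boundary maps are adjointable with closed range so that the homology/cohomology are themselves projective and the $K$-theoretic Euler characteristic identity holds; and (b) matching the $K$-class of $\overline{H}^1_{(2)}(G)$, produced this way, with $[p_1]$ as defined via harmonic cochains. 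For (a) I expect the verification that the boundary operator $\partial\colon \ell^2(G/\bZ_d)\to \ell^2(G/\bZ_m)\oplus\ell^2(G/\bZ_n)$ has closed range to be the main obstacle; one route is to exhibit an explicit partial isometry / use the fact that over $\mathrm{C^*_r}(G)$ the image of $\partial$ equals $(1-p_1)$ times the edge module once the coexactness at the vertex level (i.e.\ $\overline{H}^0=0$) is known, and then invoke that a bounded module map between Hilbert modules with one side a complemented image is automatically nicely behaved. For (b) I would use \cite[Proposition 7]{LNP} and the identification, already used implicitly in \cite{Pooya-Wang}, of $p_n$ with the projection onto reduced $\ell^2$-cohomology coming from any free cocompact (here: finite-type) resolution; since the Bass--Serre complex is such a resolution up to the finite-subgroup projections, the classes agree. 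Finally I would remark, as the paper notes, that setting $d=1$ collapses $[\tfrac1d\sum r^i]$ to $[1]$ and recovers Theorem \ref{PW}, which serves as a consistency check on the computation.
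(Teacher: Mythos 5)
Your route is genuinely different from the paper's: instead of the Gruenberg resolution, an explicit computation of $\mathrm{ker}\Delta_1$, and explicitly constructed intertwiners in $\mathrm{M}_2(\mathrm{C^*_r}(G))$ (the operator $V$ in Lemma \ref{lemma:H1} and $U$ in Lemma \ref{lemma2}), you propose the Bass--Serre tree complex $0\to \ell^2(G/\bZ_d)\to \ell^2(G/\bZ_m)\oplus\ell^2(G/\bZ_n)\to 0$ and a $K$-theoretic Euler characteristic, using $\overline{H}^0_{(2)}(G)=0$; the signs do come out right and yield $[p_1]=[h]-[p]-[q]$. This is an attractive and more conceptual strategy, and the $d=1$ consistency check is correct. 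However, the two points you yourself flag as "requiring genuine care" are in fact the entire mathematical content of the theorem, and the fixes you sketch do not close them.

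Concretely: (i) the identity $[P_0]-[P_1]=[\overline H_0]-[\overline H_1]$ in $\mathrm{K_0(C^*_r}(G))$ needs the boundary operator of the tree complex to have closed range \emph{over} $\mathrm{C^*_r}(G)$, i.e.\ a spectral gap at $0$ for the associated Laplacian in matrices over $\mathrm{C^*_r}(G)$, so that the kernels are ranges of projections in $\mathrm{M}_k(\mathrm{C^*_r}(G))$ and the polar-decomposition partial isometries stay in the algebra. The claim that "a bounded module map \ldots is automatically nicely behaved" is false in general, and equivalences implemented only on the Hilbert-space (von Neumann) level give Murray--von Neumann equivalence in $L(G)$, which does \emph{not} determine classes in $\mathrm{K_0(C^*_r}(G))$ --- equality of canonical traces is strictly weaker than the statement being proved, since the application is precisely to pair with \emph{delocalized} traces. (ii) $p_1$ is defined from a \emph{free} resolution with finite skeleta, whereas your complex consists of the projective $\mathbb{C}G$-modules $\mathbb{C}[G/H]\cong \mathbb{C}G\,e_H$; identifying your class $[\overline H^1]$ with $[p_1]$ therefore requires transporting the harmonic space along a $\mathbb{C}G$-chain homotopy equivalence between the two resolutions and verifying that the induced isomorphism of harmonic subspaces is implemented by an element of matrices over $\mathrm{C^*_r}(G)$ with the requisite closed range. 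Citing \cite[Proposition 7]{LNP} cannot accomplish this, as that is a trace-level statement. Both gaps are plausibly repairable --- the spectral-gap input is available from Lemma \ref{lemma: only p1 nonvainishing} and can be transported along the same homotopy equivalence --- but writing this out is work comparable to the paper's Sections \ref{section3}--\ref{section4}, which is exactly where the paper instead performs the explicit kernel computation and builds all equivalences inside $\mathrm{M}_2(\mathrm{C^*_r}(G))$. (A minor slip: the image of $\partial$ lies in the vertex module, so your phrase "the image of $\partial$ equals $(1-p_1)$ times the edge module" should refer to the orthogonal complement of $\ker\partial$ in the edge module.)
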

The more general conclusion in this paper not only provides a deeper understanding of higher Kazhdan projections, but also yields more examples of non-vanishing delocalized $\ell^2$-Betti numbers.
Pairing both sides of the formula for $[p_1]$ in Theorem \ref{main th} with delocalized traces $\tau_{\langle g\rangle}$, we can obtain delocalized $\ell^2$-Betti numbers for $G=\bZ_m*_{\bZ_d}\bZ_n$. In particular, we present the results for $G=\mathrm{SL}(2,\bZ)=\bZ_4*_{\bZ_2}\bZ_6$ as follows.
\begin{corollary}
\label{corollary:SL2}
   The delocalized $\ell^2$-Betti numbers for 
   $$G=\mathrm{SL}(2,\bZ)=\bZ_4*_{\bZ_2}\bZ_6=\langle s,t|s^4=1,t^6=1,r:=s^2=t^3 \rangle$$  
   are given by
    \begin{equation*}
			\beta^{(2)}_{1, \langle{g}\rangle}(G) = 
			\begin{cases}
                1/12  &\qquad  g=1 \ \mathrm{or}\ g\in\langle{r}\rangle= \langle{s^2}\rangle = \langle{t^3}\rangle\\
				-1/4  &\qquad  g\in\langle{s}\rangle\ \mathrm{or} \ \langle{s^3}\rangle\\
				-1/6  &\qquad  g\in\langle{t}\rangle,\langle{t^2}\rangle,\langle{t^4}\rangle\ \mathrm{or}\ \langle{t^5}\rangle \\
				0    &\qquad \text{otherwise} 
			\end{cases}
		\end{equation*}
		and $\beta ^{(2)}_{k, \langle{g}\rangle}(G) = 0$ for $k \neq 1$ and $g\in G$.
\end{corollary}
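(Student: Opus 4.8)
The plan is to feed $m=4$, $n=6$, $d=2$ into Theorem~\ref{main th} (the hypotheses $d\mid m$, $d\mid n$, $d\ne m$, $d\ne n$, $m\ge 2$, $n\ge 3$ all hold, and then $r=s^2=t^3$ has order $2$), which yields the identity
\begin{equation*}
[p_1]=\Bigl[\tfrac{1+r}{2}\Bigr]-\Bigl[\tfrac{1+s+s^2+s^3}{4}\Bigr]-\Bigl[\tfrac{1+t+t^2+t^3+t^4+t^5}{6}\Bigr]\in\mathrm{K_0(C^*_r}(G)),
\end{equation*}
and then to pair this identity with the delocalized traces. Since $G=\mathrm{SL}(2,\bZ)$ is virtually free, hence hyperbolic, the results of Puschnigg and of Pooya and Wang recalled in the introduction apply: for each $g\in G$ the delocalized trace $\tau_{\langle g\rangle}$ extends from $\ell^1(G)$ to a smooth dense subalgebra of $\mathrm{C^*_r}(G)$ and induces a homomorphism $\tau_{\langle g\rangle,*}\colon\mathrm{K_0(C^*_r}(G))\to\mathbb{C}$ with $\tau_{\langle g\rangle,*}([p_1])=\beta^{(2)}_{1,\langle g\rangle}(G)$; for $g=1$ this is just the pairing with the canonical von Neumann trace, i.e.\ the ordinary $\ell^2$-Betti number.

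\emph{Reduction to a conjugacy count.} The three projections in the formula have finite support, so they lie in $\mathbb{C}[G]\subset\ell^1(G)$ and the delocalized traces act on them by the elementary rule $\tau_{\langle g\rangle}(\sum_h a_h h)=\sum_{h\in\langle g\rangle}a_h$. Writing $N_{\langle g\rangle}(X)$ for the number of elements of a finite multiset $X\subset G$ lying in the conjugacy class $\langle g\rangle$, we get for every $g$
\begin{equation*}
\beta^{(2)}_{1,\langle g\rangle}(G)=\tfrac12\,N_{\langle g\rangle}(\{1,r\})-\tfrac14\,N_{\langle g\rangle}(\{1,s,s^2,s^3\})-\tfrac16\,N_{\langle g\rangle}(\{1,t,t^2,t^3,t^4,t^5\}).
\end{equation*}
Hence everything reduces to sorting the torsion elements $1,\,r,\,s,\,s^3,\,t,\,t^2,\,t^4,\,t^5$ (recall $s^2=t^3=r$) into conjugacy classes of $G$.

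\emph{The conjugacy analysis (the main point).} First, $r=s^2=t^3$ lies in both abelian factors $\bZ_4$ and $\bZ_6$, so it commutes with the generators $s,t$ and is central in $G$; equivalently $r=-I$ in $\mathrm{SL}(2,\bZ)$, so $\langle r\rangle=\{r\}$. I claim the eight torsion elements above represent eight \emph{distinct} conjugacy classes, so that each of the three multisets above meets any given conjugacy class in at most one element, and the only classes on which $N$ is non-zero are the eight appearing in the statement. All pairs except $\{s,s^3\}$, $\{t,t^5\}$ and $\{t^2,t^4\}$ are separated simply by element order, so the whole content is the non-conjugacy of an elliptic element of $\mathrm{SL}(2,\bZ)$ with its inverse for orders $3,4,6$. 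This is classical: such an element generates an imaginary quadratic order $\mathcal{O}$ (namely $\bZ[i]$ or $\bZ[\zeta_3]$), all of whose units have norm $1$, so the full centralizer in $\mathrm{GL}(2,\bZ)$ already lies in $\mathrm{SL}(2,\bZ)$; meanwhile a conjugator carrying the element to its inverse necessarily has determinant $-1$ (for instance $\mathrm{diag}(1,-1)$ conjugates $\left(\begin{smallmatrix}0&-1\\1&0\end{smallmatrix}\right)$ to its inverse), and therefore no conjugator of determinant $1$ can exist. This structural bookkeeping is the crux; everything else is arithmetic.

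\emph{Conclusion.} Substituting into the displayed formula: for $g=1$, and for $g\in\langle r\rangle$ (where $r=s^2=t^3$ contributes once to each of the second and third multisets), we obtain $\tfrac12-\tfrac14-\tfrac16=\tfrac1{12}$; for $g\in\langle s\rangle$ or $\langle s^3\rangle$ we obtain $0-\tfrac14-0=-\tfrac14$; for $g\in\langle t^j\rangle$ with $j\in\{1,2,4,5\}$ we obtain $0-0-\tfrac16=-\tfrac16$; and for every other conjugacy class all three counting numbers vanish, giving $\beta^{(2)}_{1,\langle g\rangle}(G)=0$. Finally, since $G$ is virtually free, the argument of \cite[Lemma~4.1]{Pooya-Wang} shows that $p_1$ is the only higher Kazhdan projection that can be non-zero, whence $\beta^{(2)}_{k,\langle g\rangle}(G)=0$ for all $k\ne1$ and all $g\in G$. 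The main obstacle is precisely the conjugacy-class analysis in $\mathrm{SL}(2,\bZ)$ just outlined; the remaining steps are formal.
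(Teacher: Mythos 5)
Your proposal is correct and follows essentially the same route as the paper: specialize Theorem \ref{main th} to $m=4$, $n=6$, $d=2$, pair the resulting identity $[p_1]=[h]-[p]-[q]$ with the delocalized traces (which on group-ring elements just sum coefficients over a conjugacy class), and invoke virtual freeness for the vanishing in degrees $k\neq 1$. Your explicit verification that $s\not\sim s^{3}$, $t\not\sim t^{5}$, $t^{2}\not\sim t^{4}$ in $\mathrm{SL}(2,\bZ)$ (via the unit groups of $\bZ[i]$ and $\bZ[\zeta_3]$) supplies a detail the paper leaves implicit in its general counting formula $\beta^{(2)}_{1,\langle g\rangle}=\frac{|\langle g\rangle\cap\bZ_d|}{d}-\frac{|\langle g\rangle\cap\bZ_m|}{m}-\frac{|\langle g\rangle\cap\bZ_n|}{n}$, and is a welcome addition rather than a deviation.
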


The generalization in this paper is sophisticated.
We explain the main challenges as follows. 
First, to calculate the first higher Kazhdan projection for $G=\bZ_m*_{\bZ_d}\bZ_n$, one needs to consider a model $X$ of $BG$ with a finite 2-skeleton and express the first Laplacian $\Delta_1$ associated with the cellular chain complex of $X$ with coefficients in $\ell^2(G)$. 
To address this, we employ Gruenberg's free resolution of the trivial $\bZ G$-module $\bZ$ \cite{Gruenberg1960}.
Second, computing harmonic forms $\mathrm{ker}\Delta_1$ in this broader case is more intricate and technical.
In fact, this constitutes the core contribution of our paper, and we dedicate Section 3 to presenting the detailed computational arguments.
Finally, to express the $K$-class of the first higher Kazhdan projection $p_1$ (which is precisely the orthogonal projection onto $\mathrm{ker}\Delta_1$) in $\mathrm{K_0(C^*_r}(G))$, we employ orthogonal direct sum decompositions within copies of $\ell^2(G)$. 
This approach originally follows the techniques of Pooya and Wang \cite{Pooya-Wang}, which can be naturally generalized to our broader setting.

The remainder of this paper is organized as follows. 
In Section \ref{section2}, we recall the necessary preliminaries including higher Kazhdan projections, delocalized $\ell^2$-Betti numbers, and a free resolution of finitely presented groups.
In Section \ref{section3}, we calculate the kernel of the first Laplacian $\Delta_1$ for $G=\bZ_m*_{\bZ_d}\bZ_n$.
In Section \ref{section4}, we present the proof of our main theorem, Theorem \ref{main th}, concerning the $K$-theory class of the first higher Kazhdan projection for $G=\bZ_m*_{\bZ_d}\bZ_n$.

\section*{Acknowledgements} 
I would like to express my profound gratitude to my supervisor Hang Wang for suggesting the topic of this paper and for her invaluable guidance and continuous support throughout the research. 
I am also sincerely thankful to Sanaz Pooya for numerous inspiring discussions that greatly improved this work.

\section{Preliminaries}
\label{section2}
\subsection{Higher Kazhdan projections}


In this subsection, we briefly recall the definition of higher Kazhdan projections in \cite{LNP}.
Let $G$ be a group of type $F_{n+1}$ with a chosen CW-complex model $X$ of $BG$ with finite $n+1$-skeleton.
Denote by $\bZ G$ the group ring of $G$ over $\bZ$.
Then the cellular chain complex of $X$ implies a free resolution of the trivial $\bZ G$-module $\bZ$:
\begin{equation}
\tag{*} \label{free resolution}
   \cdots \to (\bZ G)^{\oplus k_{n+1}} \stackrel{\delta_{n}}{\longrightarrow}  (\bZ G)^{\oplus k_n} \stackrel{\delta_{n-1}}{\longrightarrow} \cdots  \longrightarrow (\bZ G)^{\oplus k_{1}} \stackrel{\delta_{0}}{\longrightarrow}   \bZ G  \stackrel{\epsilon}{\longrightarrow} \bZ \longrightarrow 0,
   \end{equation}
where for $l\leq n+1$, $k_l$ denotes the number of $l$-cells of $X$. 
Denote by $\mathrm{M}_{m\times m^{\prime}}(\bZ G)$ the algebra of $m\times m^{\prime}$ matrices over $\bZ G$, and by $\mathrm{M}_{m}(\bZ G)$ when $m=m^{\prime}$.
In the above resolution (\ref{free resolution}), $\delta_{i}$ denotes the associated boundary differential that belongs to $\mathrm{M}_{k_{i}\times k_{i+1}}(\bZ G)$ for $0 \leq i \leq
 n$, and $\epsilon$ denotes the augmentation map associated with $G$ given by
   \begin{align*}
   	\epsilon: \bZ G &\to \bZ\\
   	\sum_{s} a_sg_s &\mapsto \sum_{s} a_s, \ \mathrm{where\ the\ sum\ is\ finite \ and}\ a_s\in \bZ,\ g_s\in G.
   \end{align*}
Clearly, it is a homomorphism of $\bZ G$-modules.
   
Let $(\pi,H)$ be a unitary representation of $G$. Then after applying the functor $\mathrm{Hom}_{\bZ G}(\cdot,H)$ to the free resolution (\ref{free resolution}) and identifying the terms with direct sums of $H$, we obtain the induced cochain complex with coefficients in $(\pi,H)$:
    \begin{equation*}
    	H \stackrel{\pi(d_0)}{\longrightarrow} H^{\oplus k_1} \stackrel{\pi(d_1)}{\longrightarrow} H^{\oplus k_2} \longrightarrow \cdots.
    \end{equation*}
The above codifferentials are represented by elements
$$
d_i \in \mathrm{M}_{k_{i+1}\times k_{i}}(\bZ G),\quad 0 \leq i \leq
 n,
$$
which induce the following operators under the unitary representation $(\pi,H)$:
$$
\pi(d_i): H^{\oplus k_{i}} \to H^{\oplus k_{i+1}},\quad 0 \leq i \leq n.
$$
The Laplacian in degree $n$ is then defined by the element
$$
\Delta_n=d_n^*d_n+d_{n-1}d_{n-1}^* \in \mathrm{M}_{k_{n}}(\bZ G).
$$
The kernel of $\pi(\Delta_n)$ on $H^{\oplus k_n}$ is the space of harmonic $n$-cochains with coefficients in $(\pi,H)$ and the Hodge-de Rham isomorphism implies its connection with the reduced group cohomology:
$$
\mathrm{ker}\,\pi(\Delta_n) \cong
\overline{H}^n(G,\pi).
$$
\begin{definition}[\cite{LNP}, Definition 4]
    Let $G$ be a discrete group of type $F_{n+1}$ with a family of unitary representations $\varPsi$.
    The Kazhdan projection in degree $n$ is a specific projection $p_n\in \mathrm{M}_{k_{n}}(\mathrm{C^*_{\varPsi}}(G))$ such that for every unitary representation $(\pi,H)\in \varPsi$, $\pi(p_n)$ is the orthogonal projection onto $\mathrm{ker}\,\pi(\Delta_n)$. 
\end{definition}

 In the following, we present propositions characterizing the existence of Kazhdan projections (established in \cite{LNP}) and laying a foundation for the connection between their existence and cohomology groups (established in \cite{BaderNowak}).
 In particular, for virtually free groups, non-vanishing Kazhdan projections exist only in degree one, according to \cite{Pooya-Wang}.
 
\begin{proposition}[\cite{LNP}, Proposition 5]
    If $\Delta_n\in \mathrm{M}_{k_{n}}(\bZ G) \subseteq \mathrm{M}_{k_{n}}(\mathrm{C^*_{\varPsi}}(G))$ has a spectral gap at 0, then the Kazhdan projection $p_n$ exists in the $C^*$-algebra $\mathrm{M}_{k_{n}}(\mathrm{C^*_{\varPsi}}(G))$.
\end{proposition}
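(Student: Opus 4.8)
The plan is to produce $p_n$ by applying the continuous functional calculus to $\Delta_n$ inside the $C^*$-algebra $\mathrm{M}_{k_n}(\mathrm{C^*_{\varPsi}}(G))$, and then to verify the defining spectral property one representation at a time. First I would record that $\Delta_n=d_n^*d_n+d_{n-1}d_{n-1}^*$ is positive and self-adjoint, so its spectrum $\sigma(\Delta_n)$ lies in $[0,\infty)$. The hypothesis that $\Delta_n$ has a spectral gap at $0$ means precisely that $0$ is an isolated point of $\sigma(\Delta_n)$: there is an $\eps>0$ with $\sigma(\Delta_n)\subseteq\{0\}\cup[\eps,\infty)$.

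Since $\{0\}$ is then simultaneously open and closed in $\sigma(\Delta_n)$, the function
\begin{equation*}
\chi(\lambda)=\begin{cases} 1, & \lambda=0, \\ 0, & \lambda\geq\eps, \end{cases}
\end{equation*}
restricts to a continuous function on $\sigma(\Delta_n)$. I would then set $p_n:=\chi(\Delta_n)$ via the continuous functional calculus. Because $\chi$ is real-valued and satisfies $\chi^2=\chi$ on the spectrum, the functional calculus forces $p_n^*=p_n$ and $p_n^2=p_n$, so $p_n$ is a genuine projection living in $\mathrm{M}_{k_n}(\mathrm{C^*_{\varPsi}}(G))$, which is the first requirement.

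It remains to identify $\pi(p_n)$ for each $(\pi,H)\in\varPsi$. The representation $\pi$ extends to a unital $*$-homomorphism $\mathrm{M}_{k_n}(\mathrm{C^*_{\varPsi}}(G))\to\mathcal{B}(H^{\oplus k_n})$, and $*$-homomorphisms intertwine the continuous functional calculus, so $\pi(p_n)=\chi(\pi(\Delta_n))$. Since the $\mathrm{C^*_{\varPsi}}$-norm dominates $\|\pi(\cdot)\|$, we have $\sigma(\pi(\Delta_n))\subseteq\sigma(\Delta_n)\subseteq\{0\}\cup[\eps,\infty)$; thus $\pi(\Delta_n)$ inherits the same gap, and $\chi(\pi(\Delta_n))$ is the spectral projection of the positive operator $\pi(\Delta_n)$ onto the eigenvalue $0$. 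For a positive operator this spectral projection is exactly the orthogonal projection onto its kernel, so $\pi(p_n)$ projects onto $\ker\pi(\Delta_n)$, the space of harmonic $n$-cochains. This verifies the defining property of the Kazhdan projection.

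The main obstacle, and the only genuinely non-formal point, is the passage from the abstract spectral gap of $\Delta_n$ in the $C^*$-algebra to uniform behaviour across the entire family $\varPsi$: one must use that the inclusion of spectra $\sigma(\pi(\Delta_n))\subseteq\sigma(\Delta_n)$ transports the single gap to every $\pi$ at once, so that the \emph{one} continuous function $\chi$ cuts out the kernel simultaneously in all representations. Without such a uniform gap one could not realize all of these kernel projections by a single element of the $C^*$-algebra. A secondary, routine check is that the spectral projection at $0$ coincides with the kernel projection for the positive operator $\pi(\Delta_n)$, which holds because $\ker\pi(\Delta_n)$ is precisely the range of that spectral projection.
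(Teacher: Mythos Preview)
The paper does not supply its own proof of this proposition; it is quoted verbatim from \cite{LNP} (Proposition~5) as background and left uncommented. Your argument is the standard one and is correct: the spectral gap makes $\chi_{\{0\}}$ continuous on $\sigma(\Delta_n)$, so $p_n:=\chi_{\{0\}}(\Delta_n)$ is a projection in $\mathrm{M}_{k_n}(\mathrm{C^*_{\varPsi}}(G))$, and naturality of the continuous functional calculus under $*$-homomorphisms, together with $\sigma(\pi(\Delta_n))\subseteq\sigma(\Delta_n)$, forces $\pi(p_n)$ to be the kernel projection of $\pi(\Delta_n)$ for every $(\pi,H)\in\varPsi$. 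This is precisely the argument one finds in \cite{LNP}, so there is nothing to compare beyond noting that the present paper simply imports the result.
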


\begin{proposition}[\cite{BaderNowak}, Proposition 16]
    Let $\pi$ be a unitary representation of $G$.
    The Laplacian $\pi(\Delta_n)$ has a spectral gap at 0 if and only if the cohomology groups $H^n(G,\pi)$ and $H^{n+1}(G,\pi)$ are both reduced. 
\end{proposition}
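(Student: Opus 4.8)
The plan is to reduce the spectral statement about $\pi(\Delta_n)$ to two independent closed-range statements about the coboundary operators and then to invoke a Hodge-type decomposition. First I would record the dictionary between reducedness and closed range: the reduced cohomology in degree $n$ is $\overline{H}^n(G,\pi)=\mathrm{ker}\,\pi(d_n)/\overline{\mathrm{im}\,\pi(d_{n-1})}$, so $H^n(G,\pi)$ is reduced precisely when $\mathrm{im}\,\pi(d_{n-1})$ is closed, i.e.\ when $\pi(d_{n-1})$ has closed range; likewise $H^{n+1}(G,\pi)$ is reduced precisely when $\pi(d_n)$ has closed range. Thus the asserted equivalence becomes: $\pi(\Delta_n)$ has a spectral gap at $0$ if and only if both $\pi(d_{n-1})$ and $\pi(d_n)$ have closed range. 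Note that all the $\pi(d_i)$ are bounded operators, being images under $\pi$ of matrices over $\bZ G$, so the functional analysis below applies without domain issues.

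Next I would isolate the elementary fact underlying everything: for a bounded operator $T$ between Hilbert spaces, $T$ has closed range if and only if $T^*T$ has a spectral gap at $0$ (equivalently $TT^*$ does, since the nonzero spectra of $T^*T$ and $TT^*$ coincide). Indeed, closed range of $T$ is equivalent to $T$ being bounded below on $(\mathrm{ker}\,T)^{\perp}$, and the identity $\|Tx\|^2=\langle T^*Tx,x\rangle$ shows this is exactly the statement that $0$ is isolated in, or absent from, $\sigma(T^*T)$.

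The heart of the argument is the Hodge decomposition forced by the cochain relation $\pi(d_n)\pi(d_{n-1})=0$. Taking adjoints gives $\pi(d_{n-1})^*\pi(d_n)^*=0$, so the subspaces $\overline{\mathrm{im}\,\pi(d_{n-1})}$ and $\overline{\mathrm{im}\,\pi(d_n)^*}$ are mutually orthogonal, and together with the harmonic space $\mathcal{K}_n=\mathrm{ker}\,\pi(\Delta_n)=\mathrm{ker}\,\pi(d_n)\cap\mathrm{ker}\,\pi(d_{n-1})^*$ they give an orthogonal splitting
$$H^{\oplus k_n}=\overline{\mathrm{im}\,\pi(d_{n-1})}\ \oplus\ \overline{\mathrm{im}\,\pi(d_n)^*}\ \oplus\ \mathcal{K}_n.$$
I would then verify that $\pi(\Delta_n)=\pi(d_n)^*\pi(d_n)+\pi(d_{n-1})\pi(d_{n-1})^*$ preserves each summand and acts there through a single term: on $\overline{\mathrm{im}\,\pi(d_{n-1})}$ the term $\pi(d_n)^*\pi(d_n)$ vanishes (since $\mathrm{im}\,\pi(d_{n-1})\subseteq\mathrm{ker}\,\pi(d_n)$, and then by continuity on the closure), so $\pi(\Delta_n)=\pi(d_{n-1})\pi(d_{n-1})^*$ there; symmetrically $\pi(\Delta_n)=\pi(d_n)^*\pi(d_n)$ on $\overline{\mathrm{im}\,\pi(d_n)^*}$; and $\pi(\Delta_n)=0$ on $\mathcal{K}_n$. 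Consequently
$$\sigma\big(\pi(\Delta_n)\big)\setminus\{0\}=\Big(\sigma\big(\pi(d_{n-1})\pi(d_{n-1})^*\big)\cup\sigma\big(\pi(d_n)^*\pi(d_n)\big)\Big)\setminus\{0\}.$$

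Finally, combining this last display with the functional-analytic fact, $\pi(\Delta_n)$ has a spectral gap at $0$ iff each of $\pi(d_{n-1})\pi(d_{n-1})^*$ and $\pi(d_n)^*\pi(d_n)$ does, iff both $\pi(d_{n-1})$ and $\pi(d_n)$ have closed range, iff both $H^n(G,\pi)$ and $H^{n+1}(G,\pi)$ are reduced, as claimed. The main obstacle I anticipate is not conceptual but the careful bookkeeping around the two positive summands of $\pi(\Delta_n)$: a spectral gap of a sum of positive operators is \emph{not} implied by gaps of the individual summands in general, and what rescues the argument here is precisely the orthogonality $\overline{\mathrm{im}\,\pi(d_{n-1})}\perp\overline{\mathrm{im}\,\pi(d_n)^*}$ coming from $\pi(d_n)\pi(d_{n-1})=0$. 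Establishing rigorously that $\pi(\Delta_n)$ is genuinely block-diagonal for this splitting — in particular that it leaves the closures of the two images invariant — is the step that must be executed with the most care.
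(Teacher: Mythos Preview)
Your argument is correct and is exactly the standard Hodge-decomposition proof one expects for this statement: translate reducedness into closed range of the coboundary maps, use $d_n d_{n-1}=0$ to obtain the orthogonal splitting of $H^{\oplus k_n}$, observe that $\pi(\Delta_n)$ is block-diagonal with the single summands $\pi(d_{n-1})\pi(d_{n-1})^*$ and $\pi(d_n)^*\pi(d_n)$ on the two nontrivial blocks, and conclude via the closed-range criterion for $T^*T$. The bookkeeping you flag as the delicate point is handled correctly.

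Note, however, that the paper itself does not supply a proof of this proposition: it is merely quoted from \cite{BaderNowak} as background. So there is no ``paper's own proof'' to compare against here; your proposal is simply a self-contained verification of the cited result, and it would be at home in the Bader--Nowak reference.
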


\begin{lemma}[\cite{Pooya-Wang}, Lemma 4.1]
\label{lemma: only p1 nonvainishing}
    Let $G$ be a virtually free group. Then $p_1$ lies in matrices over $\mathrm{C^*_{r}}(G)$ and defines a non-zero class $[p_1]\in \mathrm{K_0(C^*_{r}}(G))$.
    All other $p_n$ vanish when $n\neq 1$.
\end{lemma}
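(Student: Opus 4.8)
The plan is to establish three things separately: (i) $p_n$ is the zero projection for every $n\neq 1$; (ii) $\lambda(\Delta_1)$ has a spectral gap at $0$, so that $p_1$ exists in $\mathrm{M}_{k_1}(\mathrm{C^*_r}(G))$; and (iii) the canonical trace of $[p_1]$ is nonzero, whence $[p_1]\neq 0$. Throughout I would fix a finite-index free subgroup $F\leq G$, say free of rank $k$, and take as standing hypotheses that $G$ is infinite and finitely generated (hence of type $F_\infty$) and, for (iii), that $G$ is not virtually cyclic, i.e.\ $k\geq 2$. For (i): in degree $0$ the Hodge--de Rham isomorphism gives $\ker\lambda(\Delta_0)\cong\overline{H}^0(G,\lambda)=\ell^2(G)^G=0$ since $G$ is infinite, so $p_0=0$; and for $n\geq 2$ one uses that $G$ has rational cohomological dimension at most $1$ (a free group does, and a restriction--corestriction argument over $\mathbb{Q}$ shows $H^n(G,M)$ embeds in $H^n(F,M)$ for every $\mathbb{Q}G$-module $M$), so $H^n(G,\lambda)=0$, hence $\overline{H}^n(G,\lambda)=\ker\lambda(\Delta_n)=0$ and $p_n=0$. (Alternatively one invokes $\beta^{(2)}_n(G)=\beta^{(2)}_n(F)/[G:F]=0$ for $n\geq 2$ together with faithfulness of the von Neumann dimension.)

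For (ii): by Proposition~5 of \cite{LNP} it suffices that $\lambda(\Delta_1)$ has a spectral gap at $0$, and by Proposition~16 of \cite{BaderNowak} this is equivalent to $H^1(G,\lambda)$ and $H^2(G,\lambda)$ being reduced. Here $H^2(G,\lambda)=0$ by part (i), hence is trivially reduced, so the content lies in reducedness of $H^1(G,\lambda)$. I would transport this to the free subgroup: since $\ell^2(G)\cong\mathrm{Ind}_F^G\ell^2(F)=\mathrm{Coind}_F^G\ell^2(F)$ as unitary $G$-representations (finite index) and a free $\bZ G$-resolution of $\bZ$ is also a free $\bZ F$-resolution, Shapiro's lemma identifies the cochain complex computing $H^*(G,\ell^2 G)$ with the one computing $H^*(F,\ell^2 F)$, compatibly with the Hilbert-space structures; thus $H^1(G,\lambda)$ is reduced iff $H^1(F,\ell^2 F)$ is. For $F$ free of rank $k$ I would compute $H^1$ from the standard $1$-dimensional model of $BF$: there $\Delta_1=d_0 d_0^*$, the differential is $d_0\colon\ell^2(F)\to\ell^2(F)^{\oplus k}$, $h\mapsto((g_i-1)h)_i$, and $\Delta_0=d_0^* d_0=2k-\sum_{i=1}^k (g_i+g_i^{-1})$. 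By the Kesten/Akemann--Ostrand estimate $\bigl\|\sum_{i=1}^k(g_i+g_i^{-1})\bigr\|_{\mathrm{C^*_r}(F)}=2\sqrt{2k-1}<2k$ for $k\geq 2$, the element $\Delta_0$ is invertible in $\mathrm{C^*_r}(F)$; hence $d_0$ has closed range, $H^1(F,\ell^2 F)$ is reduced, and consequently $\lambda(\Delta_1)$ has a spectral gap and $p_1\in\mathrm{M}_{k_1}(\mathrm{C^*_r}(G))$.

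For (iii): the canonical von Neumann trace $\tau$ on $\mathrm{C^*_r}(G)$ induces $\tau_*\colon\mathrm{K_0(C^*_r}(G))\to\R$, and by Proposition~7 of \cite{LNP} we have $\tau_*([p_1])=\beta^{(2)}_1(G)=\beta^{(2)}_1(F)/[G:F]=(k-1)/[G:F]>0$ for $k\geq 2$, so $[p_1]\neq 0$. I expect the main obstacle to be step (ii): one must pass to the free subgroup in a way that respects the topologies on the cochain groups (an abstract isomorphism of cohomology groups is not enough), and then exhibit a genuine spectral gap, which is exactly where non-amenability of $F$ enters via the strict inequality $2\sqrt{2k-1}<2k$. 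The remaining steps are routine given the Hodge--de Rham dictionary and the standard behaviour of $\ell^2$-Betti numbers under finite-index subgroups.
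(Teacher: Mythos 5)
The paper itself offers no proof of this lemma: it is quoted from \cite{Pooya-Wang} (Lemma 4.1), so your argument can only be measured against the cited source's strategy. Your three-step scheme -- (i) $p_n=0$ for $n\neq1$ via $\ell^2(G)^G=0$ and $\mathrm{cd}_{\mathbb{Q}}(G)\le 1$ through restriction--corestriction to the finite-index free subgroup, (ii) existence of $p_1$ via the Bader--Nowak reducedness criterion, (iii) $[p_1]\neq0$ via the trace pairing and $\beta^{(2)}_1(G)=\beta^{(2)}_1(F)/[G:F]>0$ -- is sound and is essentially the expected argument. The one place where you take a genuinely more roundabout route is the reducedness of $H^1(G,\lambda)$ in step (ii). Since $\ker\lambda(d_0)=\ell^2(G)^G=0$ for infinite $G$, the image of $\lambda(d_0)$ is closed if and only if $\lambda(\Delta_0)=\lambda(d_0)^*\lambda(d_0)$ is invertible, and by Kesten's amenability criterion this holds precisely because $G$ itself is non-amenable (an infinite, non virtually cyclic, virtually free group contains a free group of rank $2$). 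Arguing directly on $G$ this way avoids the two functional-analytic compatibility points your detour through $F$ requires, namely that the Shapiro isomorphism is a bounded isomorphism of Hilbert-space cochain complexes and that reducedness is invariant under change of finite-type free resolution; both are true (finite-type free resolutions over $\mathbb{Z}F$ are chain homotopy equivalent, and the induced maps on Hom-complexes are bounded), but you only flag them rather than verify them, and they carry most of the technical weight of your version. The Akemann--Ostrand value $2\sqrt{2k-1}$ is correct but more than is needed: any spectral gap for $\Delta_0$, i.e.\ non-amenability, suffices.

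One further remark: your standing hypotheses that $G$ is infinite and not virtually cyclic do not appear in the statement, but they are genuinely needed for the conclusion as written -- for $G=\mathbb{Z}$ one has $\ker\Delta_1=\ker(1-t^{-1})=0$, so $p_1=0$, and for finite $G$ the projection $p_0$ does not vanish. Making these assumptions explicit, as you do, is the correct reading of the lemma in the setting where the paper applies it (non-elementary amalgams $\mathbb{Z}_m*_{\mathbb{Z}_d}\mathbb{Z}_n$), so this is a clarification rather than a defect of your proof.
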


In this paper, we focus on the case where $\varPsi$ is the left regular representation $(\lambda,\, \ell^2(G))$ of $G$. Therefore, $\mathrm{C^*_{\varPsi}}(G)$ coincides with the reduced group $C^*$-algebra $\mathrm{C^*_{r}}(G)$.
For simplicity, we denote the space of harmonic $n$-cochains with coefficients in $(\lambda,\, \ell^2(G))$ by $\mathrm{ker}\Delta_n$ instead of $\mathrm{ker}\,\lambda(\Delta_n)$ throughout this paper.

\subsection{Delocalized $\ell^2$-Betti numbers}

In this subsection, we briefly recall the $K$-theoretic definition of delocalized $\ell^2$-Betti numbers in \cite{Pooya-Wang}.
Let $G$ be a discrete group and $g\in G$. The delocalized trace $\tau_{\langle g \rangle}$ on $\ell^1(G)$ is the following bounded linear map with trace property
\begin{align*}
\tau_{\langle g \rangle}: \ell^1(G) &\to \mathbb{C},\\
f &\mapsto \tau_{\langle g \rangle}(f)=\sum_{h\in \langle g \rangle}f(h).
\end{align*}
Recall that $\langle g \rangle$ denotes the conjugacy class of $g$.

\begin{definition}[\cite{Pooya-Wang}, Definition 2.11]
    Let $G$ be a discrete group of type $F_{n+1}$ and $g\in G$.
    Assume that there exists a smooth subalgebra $\mathcal{S}\subseteq \mathrm{C^*_r}(G)$--that is, a dense Banach subalgebra and closed under functional calculus--such that the delocalized trace $\tau_{\langle g \rangle}$ admits a continuous extension to it.
    Assuming further the existence of the higher Kazhdan projection $p_n\in \mathrm{M}_{k}(\mathrm{C^*_{r}}(G))$ for some $k$, the $n$-th delocalized $\ell^2$-Betti number of $G$ is defined via the pairing of its $K$-theory class $[p_n]\in \mathrm{K_0(C^*_r}(G))$ with $\tau_{\langle g \rangle}$:
    $$
    \beta ^{(2)}_{n, \langle{g}\rangle}(G)=\tau_{\langle g \rangle}([p_n]).
    $$
\end{definition}

\subsection{A free resolution of finitely presented groups}
\label{subsection: Gruenberg resolution}
For a finitely presented group $G$, Gruenberg \cite{Gruenberg1960,Gruenberg1970} constructed a free resolution associated with $G$ by studying its relation module and building upon Fox's theory of derivatives in group rings \cite{Fox}. 
The Gruenberg resolution generalizes Lyndon's earlier work on one-relator groups (see \cite{Lyndon} for details).
In the following theorem, we present the Gruenberg resolution, but restrict ourselves to the first two differentials.
Geometrically, these differentials arise from a particular CW-complex model of $BG$ whose 2-skeleton is finite and is given by the presentation complex associated with $G$.

\begin{theorem}[\cite{Gruenberg1960}, Theorem 1]
   	\label{th1} 
   Let $G$ be a finitely presented group given by a presentation
   \begin{equation*}
   	G=\langle f_1,\cdots,f_d| r_1,\cdots,r_t \rangle.
   \end{equation*}
   Regard this presentation as a short exact sequence of groups
   \begin{equation*}
   	 1\to R\to F \to G \to 1,
   \end{equation*}
   in which
   \begin{equation*}
   	F=\langle f_1,\cdots,f_d \rangle,\ R=\langle r_1,\cdots,r_t \rangle
   \end{equation*}
   are free.
   Then there exists a free resolution of the trivial $\bZ G$-module $\bZ$:
   \begin{equation}
   	\label{resolution}
   	\cdots \to (\bZ G)^{\oplus t} \stackrel{\delta_1}{\longrightarrow}  (\bZ G)^{\oplus d} \stackrel{\delta_0}{\longrightarrow}\bZ G \stackrel{\epsilon}{\longrightarrow} \bZ \longrightarrow 0,
   \end{equation}
   with
   \begin{equation*}
       \delta_0=\left[\begin{matrix}
           1-f_1 & \cdots & 1-f_d
       \end{matrix}\right],\ \ \delta_1=\left[\begin{matrix}
           \dfrac{\partial r_j}{\partial f_i}
       \end{matrix}\right]_{1\leq i\leq d,1\leq j \leq t}.
   \end{equation*}
   Here the Fox derivatives $\frac{\partial r_j}{\partial f_i}\in \bZ G$ are defined by first specifying elements in $\bZ F$ using the following formulas for $\bZ F$, and then projecting them to $\bZ G$ via the map $\bZ F \to \bZ G$:
   \begin{align*}
       \frac{\partial 1_F}{\partial f_i}&=0,\\
       \frac{\partial f_i}{\partial f_k}&=\begin{cases}
           1,\ \mathrm{if}\  i=k\\
           0,\ \mathrm{if}\ i\neq k
       \end{cases},\\
       \frac{\partial uv}{\partial f_i}&=\frac{\partial u}{\partial f_i}+u\frac{\partial v}{\partial f_i},
   \end{align*}
   where $1_F$ represents the group identity of $F$, $u,v\in F$, and $1\leq i,k\leq d$. 
\end{theorem}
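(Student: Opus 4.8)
The statement is a classical result of Gruenberg, so \emph{the plan is} to reconstruct the standard proof of the left-hand end of the resolution — exactness of $(\bZ G)^{\oplus t}\xrightarrow{\delta_1}(\bZ G)^{\oplus d}\xrightarrow{\delta_0}\bZ G\xrightarrow{\epsilon}\bZ\to 0$ together with the identification of $\delta_0,\delta_1$ with the Fox matrices — and then to indicate how it continues to the left. \emph{First I would} record the two facts about the free group $F$ that carry the whole argument: the Fox derivatives defined by the three displayed rules extend uniquely to $\bZ$-linear maps $\bZ F\to\bZ F$, and for every $w\in\bZ F$ one has the fundamental identity $w-\epsilon_F(w)=\sum_{i=1}^{d}\tfrac{\partial w}{\partial f_i}(f_i-1)$; consequently the augmentation ideal $I_F=\ker(\epsilon_F\colon\bZ F\to\bZ)$ is a \emph{free} left $\bZ F$-module with basis $\{f_i-1\}_{i=1}^{d}$, the Fox derivatives of $w$ being its coordinates (\cite{Fox}; this specializes to Lyndon's one-relator description, cf.\ \cite{Lyndon}). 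Next, that the augmentation ideal $I_G=\ker\epsilon$ of $\bZ G$ is generated as a left $\bZ G$-module by $\{1-f_i\}_{i=1}^{d}$ follows from $1-gh=(1-g)+g(1-h)$ and $1-g^{-1}=-g^{-1}(1-g)$ by induction on word length, the $f_i$ being generators of $G$; hence $\delta_0\colon(\bZ G)^{\oplus d}\to\bZ G$, $e_i\mapsto 1-f_i$, has image exactly $I_G=\ker\epsilon$, which gives exactness at $\bZ G$.

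\emph{The main step} is exactness at $(\bZ G)^{\oplus d}$, which I would deduce from the exact sequence of left $\bZ G$-modules attached to the extension $1\to R\to F\to G\to 1$ (here $R=\ker(F\to G)$ is the normal closure of $r_1,\dots,r_t$ in $F$, and is free as a subgroup of a free group):
\begin{equation*}
0\longrightarrow R^{\mathrm{ab}}\xrightarrow{\ \theta\ }\bZ G\otimes_{\bZ F}I_F\xrightarrow{\ \overline{\delta}_0\ }I_G\longrightarrow 0,
\end{equation*}
where $R^{\mathrm{ab}}=R/[R,R]$ is given the $\bZ G$-structure induced by conjugation in $F$ (well defined since $R$ acts trivially on $R^{\mathrm{ab}}$), $\bZ G\otimes_{\bZ F}I_F\cong(\bZ G)^{\oplus d}$ is free on the images of the $f_i-1$ by the previous paragraph, $\overline{\delta}_0$ is induced by $I_F\hookrightarrow\bZ F\twoheadrightarrow\bZ G$ (and is $\delta_0$ under that identification), and $\theta$ sends the class of $\rho\in R$ to $\bigl(\overline{\partial\rho/\partial f_1},\dots,\overline{\partial\rho/\partial f_d}\bigr)$, the bar denoting reduction along $\bZ F\to\bZ G$. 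That $\theta$ is a well-defined $\bZ G$-homomorphism is immediate from the product rule for Fox derivatives together with the fact that every $\rho\in R$ maps to $1$ in $\bZ G$ (so $\overline{\rho\,\partial\rho'/\partial f_i}=\overline{\partial\rho'/\partial f_i}$), which also forces $\theta$ to annihilate $[R,R]$. Granting the sequence, since $R$ is the normal closure of $r_1,\dots,r_t$ the module $R^{\mathrm{ab}}$ is $\bZ G$-generated by $[r_1],\dots,[r_t]$, so there is a $\bZ G$-epimorphism $(\bZ G)^{\oplus t}\to R^{\mathrm{ab}}$, $e_j\mapsto[r_j]$; composing with $\theta$ produces precisely the Fox Jacobian $\delta_1=[\partial r_j/\partial f_i]$, whose image is $\ker\overline{\delta}_0=\ker\delta_0$. \emph{The hard part} is establishing the displayed sequence itself — the injectivity of $\theta$ and the equality $\operatorname{im}\theta=\ker\overline{\delta}_0$ — which is the one genuinely non-formal point; I would prove it by a diagram chase inside $I_F$ exploiting its freeness over $\bZ F$ (this is exactly what Gruenberg works out in \cite{Gruenberg1960,Gruenberg1970}), or simply cite it.

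\emph{Finally}, to extend the sequence to a full free resolution one uses that $R$ is free, so $\bZ$ admits the length-one free $\bZ R$-resolution $0\to I_R\to\bZ R\to\bZ\to0$ with $I_R$ free over $\bZ R$; Gruenberg's construction then splices induced-up copies of this with the pieces above to give the infinite resolution (\ref{resolution}). For the purposes of this paper only $\delta_0$ and $\delta_1$ are needed: geometrically they are the cellular boundary maps of the presentation $2$-complex of the given presentation, which serves as the finite $2$-skeleton of a model of $BG$, so the first Laplacian $\Delta_1$ and its kernel depend on nothing further.
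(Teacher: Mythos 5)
The paper itself offers no proof of this statement: it is quoted verbatim from Gruenberg (with only the geometric gloss that $\delta_0,\delta_1$ are the boundary maps of the presentation $2$-complex sitting inside a model of $BG$), so there is no ``paper proof'' to match your argument against. Your reconstruction is the standard algebraic proof and is sound in outline: the Fox fundamental identity $w-\epsilon_F(w)=\sum_i\frac{\partial w}{\partial f_i}(f_i-1)$ and the freeness of $I_F$ over $\bZ F$ on $\{f_i-1\}$, exactness at $\bZ G$ via $I_G=\ker\epsilon$ being generated by the $1-f_i$, and then exactness at $(\bZ G)^{\oplus d}$ via the relation-module sequence $0\to R^{\mathrm{ab}}\to\bZ G\otimes_{\bZ F}I_F\to I_G\to 0$, with $\theta$ given in coordinates by reduced Fox derivatives and $R^{\mathrm{ab}}$ generated over $\bZ G$ by the classes of the $t$ relators (you correctly read $R$ as the normal closure of $r_1,\dots,r_t$, which the paper's notation $R=\langle r_1,\dots,r_t\rangle$ glosses over). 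You also correctly isolate the one genuinely non-formal point, the injectivity of $\theta$ and $\operatorname{im}\theta=\ker\overline{\delta}_0$, and defer it to Gruenberg/Fox; since the paper cites the entire theorem, that is no weaker than what the paper does, though calling it ``a diagram chase'' undersells it — this is essentially the Magnus--Fox embedding $R/[R,R]\hookrightarrow \bZ G\otimes_{\bZ F}I_F$, and if you wanted a self-contained proof this is the lemma you would actually have to prove.

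One imprecision worth flagging: your last paragraph suggests that Gruenberg's splicing construction directly yields the displayed continuation with $(\bZ G)^{\oplus t}$ in degree $2$. Gruenberg's resolution has higher terms free on monomials in a \emph{free basis} of the free group $R$ (typically infinite), not on the $t$ normal generators, and $R^{\mathrm{ab}}$ itself need not be free of rank $t$; so the rank-$t$ degree-$2$ term comes from the surjection $(\bZ G)^{\oplus t}\twoheadrightarrow R^{\mathrm{ab}}$ (equivalently, from the presentation $2$-complex), after which the resolution is continued by repeatedly choosing free covers of successive kernels — existence is automatic and no specific form of the higher terms is claimed or needed in the paper, which only uses $\delta_0$ and $\delta_1$. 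With that reading, your argument establishes exactly what the paper uses.
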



\section{The $\ell^2(G)$-kernel of the Laplacian for $G=\bZ_m*_{\bZ_d}\bZ_n$}
\label{section3}
In this section, we compute the kernel of the first Laplacian $\mathrm{ker}\Delta_1$ with coefficients in $\ell^2(G)$ for $G=\bZ_m*_{\bZ_d}\bZ_n$ and establish the key result in Lemma \ref{lemma:ker}.
This serves as a crucial component in the proof of our main theorem, Theorem \ref{main th}, concerning the first higher Kazhdan projection $p_1$ for $G$.

\begin{remark}
\label{remark: p1 only nonvaish}
It is known that the amalgamated product groups $G=\bZ_m*_{\bZ_d}\bZ_n$ are virtually free.
Therefore, by Lemma \ref{lemma: only p1 nonvainishing}, $p_1$ is the only non-vanishing higher Kazhdan projection in matrices over $\mathrm{C^*_{r}}(G)$, while the other $p_n$ vanish for $n\neq 1$.
\end{remark}

We start by expressing the first Laplacian $\Delta_1$ explicitly with Gruenberg's free resolution as introduced in Section \ref{subsection: Gruenberg resolution}.
	Consider the group $G=\bZ_m*_{\bZ_d}\bZ_n=\langle s,t|s^m=1,t^n=1,s^{\frac{m}{d}}=t^{\frac{n}{d}} \rangle$, where $d,m,n\in \mathbb{N}$ and $d|m$, $d|n$, $d\neq m$, $d\neq n$, $m \geq 2$ and $n\geq 3$. 
	Denote by
	\begin{equation*}
		r:=s^{\frac{m}{d}}=t^{\frac{n}{d}}.
	\end{equation*} 
	Let
	\begin{align*}
		p&=\frac{1+s+\cdots+s^{m-1}}{m},\\
		q&=\frac{1+t+\cdots+t^{n-1}}{n},\\
		h&=\frac{1+r+\cdots+r^{d-1}}{d}.
	\end{align*}
	It can be checked that $p,q,h$ are all projections on $\ell^2(G)$ and satisfy the following relations:
	\begin{align*}
		&ps=sp=p,\quad qt=tq=q,\\
		&ph=hp=p,\quad qh=hq=q.
	\end{align*}
    Furthermore, we consider the following factorizations of $p$ and $q$, respectively:
    \begin{equation}
    	\label{s1}
    	\begin{split}
    			p&=\frac{1}{m}(1+s^{\frac{m}{d}}+s^{\frac{2m}{d}}+\cdots+s^{m-\frac{m}{d}})(1+s+\cdots+s^{\frac{m}{d}-1})=\frac{d}{m}hf(s),\\
    			q&=\frac{1}{n}(1+t^{\frac{n}{d}}+t^{\frac{2n}{d}}+\cdots+t^{n-\frac{n}{d}})(1+t+\cdots+t^{\frac{n}{d}-1})=\frac{d}{n}hg(t),
    	\end{split}
    \end{equation}
    where we define elements $f(s)$ and $g(t)$ by
    \begin{equation*}
    	\begin{aligned}
    		f(s)&=1+s+\cdots+s^{\frac{m}{d}-1},\\
    		g(t)&=1+t+\cdots+t^{\frac{n}{d}-1}.
    	\end{aligned}
    \end{equation*}
	Then, by Theorem \ref{th1}, we obtain a free resolution of the trivial $\bZ G$-module $\bZ$ asscociated with the canonical presentation of $G$,
	\begin{equation}
    \label{free resolution1}
		\cdots \to (\bZ G)^{\oplus 3} \stackrel{\delta_1}{\longrightarrow}  (\bZ G)^{\oplus 2} \stackrel{\delta_0}{\longrightarrow}\bZ G \stackrel{\epsilon}{\longrightarrow} \bZ \longrightarrow 0.
	\end{equation}
    After applying the functor $\mathrm{Hom}_{\bZ G}(\cdot,\ell^2(G))$ to the above free resolution and identifying the terms with direct sums of $\ell^2(G)$, we obtain a cochain complex of Hilbert spaces:
    \begin{equation*}
    	\ell^2(G) \stackrel{d_0}{\longrightarrow} \ell^2(G)^{\oplus 2} \stackrel{d_1}{\longrightarrow} \ell^2(G)^{\oplus 3} \longrightarrow \cdots,
    \end{equation*}
    where $d_0$ and $d_1$ are the transposes of $\delta_0$ and $\delta_1$, respectively, given by:
    \begin{equation*}
    	d_0=\delta_0^{\mathsf{T}}=\left[\begin{matrix}
    		1-s\\
    		1-t
    	\end{matrix}\right],\quad
    	d_1=\delta_1^{\mathsf{T}}=\left[\begin{matrix}
    		\sum_{0\leq i \leq m-1}s^i & 0\\
    		0 & \sum_{0\leq j \leq n-1}t^j\\
    		-f(s) & g(t)
    	\end{matrix}\right],
    \end{equation*}
    followed from the Fox derivatives.
    We note that in fact we have
    \begin{equation*}
        \frac{\partial \left(s^{-\frac{m}{d}}t^{\frac{n}{d}}\right)}{\partial s}=s^{-\frac{m}{d}}f(s), \quad \frac{\partial \left(s^{-\frac{m}{d}}t^{\frac{n}{d}}\right)}{\partial t}=s^{-\frac{m}{d}}g(t).
    \end{equation*}
    However, for simplicity, we can omit the coefficient $s^{-\frac{m}{d}}$ in the associated entries of the homomorphisms $\delta_1$ and $d_1$, without affecting the exactness of the sequence (\ref{free resolution1}).
    This is because $s^{-\frac{m}{d}}$ is invertible in $\bZ G$ and commutes with every homomorphism of free $\bZ G$-modules, as it lies in the center of $\bZ G$.
    Therefore, the first Laplacian $\Delta_1$ on $\ell^2(G)^{\oplus 2}$ is
    \begin{align*}
    	\Delta_1&=d_0{d_0}^*+{d_1}^*d_1\\
     &=\left[\begin{matrix}
    		1-s\\
    		1-t
    	\end{matrix}\right]\left[\begin{matrix}
    	1-s^{-1} & 1-t^{-1}
        \end{matrix}\right] + \left[\begin{matrix}
        \sum_{0\leq i \leq m-1}s^i & 0 & -f(s)^* \\
        0 & \sum_{0\leq j \leq n-1}t^j &  g(t)^*
        \end{matrix}\right] \left[\begin{matrix}
        \sum_{0\leq i \leq m-1}s^i & 0\\
        0 & \sum_{0\leq j \leq n-1}t^j\\
        -f(s) & g(t)
        \end{matrix}\right]\\
    &=\left[\begin{matrix}
   	1-s\\
   	1-t
   \end{matrix}\right]\left[\begin{matrix}
   	1-s^{-1} & 1-t^{-1}
   \end{matrix}\right] + 
   \left[\begin{matrix}
   	\sum_{0\leq i \leq m-1}s^i & 0 \\
   	0 & \sum_{0\leq j \leq n-1}t^j 
   \end{matrix}\right] \left[\begin{matrix}
   	\sum_{0\leq i \leq m-1}s^i & 0\\
   	0 & \sum_{0\leq j \leq n-1}t^j
   \end{matrix}\right] \\
   &+
   \left[\begin{matrix}
   	-f(s)^*\\
   	g(t)^*   \end{matrix}\right]\left[\begin{matrix}
   	-f(s) & g(t)
   \end{matrix}\right]\\
      &=\left[\begin{matrix}
      	1-s & \ \\
      	\ & 1-t
      \end{matrix}\right]\left[\begin{matrix}
      1 & 1 \\
      1 & 1
     \end{matrix}\right]\left[\begin{matrix}
      1-s^{-1} & \ \\
      \ & 1-t^{-1}
     \end{matrix}\right] + \left[\begin{matrix}
     m^2p & \ \\
     \ & n^2q
     \end{matrix}\right] + \left[\begin{matrix}
     -f(s)^* & \ \\
     \ & g(t)^*
     \end{matrix}\right]\left[\begin{matrix}
      1 & 1 \\
      1 & 1
     \end{matrix}\right]\left[\begin{matrix}
      -f(s) & \ \\
          \ & g(t)
      \end{matrix}\right].
    \end{align*}

    Then we aim to calculate $\mathrm{ker}\Delta_1$ with coefficients in $\ell^2(G)$.
    In subsequent arguments, we shall adopt the following notation: 
    \begin{equation*}
    	C=\left[\begin{matrix}
    		p & \ \\
    		\ & q
    	\end{matrix}\right],
    \end{equation*}
    and we consider the following factorizations of $1-h$ for some $k$ and $l$, when $d\geq 2$:
    \begin{align*}
        1-h&=\frac{1}{d}\left[ (1-r)+\cdots +(1-r^{d-1})\right]=\frac{1}{d}\left[ (1-s^{\frac{m}{d}})+\cdots +(1-s^{\frac{m}{d}(d-1)})\right]\\&=k(1-s^{-1})=(1-s^{-1})k,\\
        1-h&=\frac{1}{d}\left[ (1-r)+\cdots +(1-r^{d-1})\right]=\frac{1}{d}\left[ (1-t^{\frac{n}{d}})+\cdots +(1-t^{\frac{n}{d}(d-1)})\right]\\
        &=l(1-t^{-1})=(1-t^{-1})l.
    \end{align*}
    Moreover, when $d=1$, it follows that $r=h=1$, and in this case, we set $k=l=0$.
    
    The proof of our main result on $\mathrm{ker}\Delta_1$ relies on the following technical lemma, which we present now.
    \begin{lemma}
    \label{lemma:restricted kernel trivial}
    The restricted operator $(f(s)k+g(t)l)|_{\mathrm{im}(1-h)}$ has trivial kernel, i.e., $$\mathrm{ker}\left(f(s)k+g(t)l\right)|_{\mathrm{im}(1-h)}=0.$$
  \end{lemma}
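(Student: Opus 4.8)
The plan is to work inside the Hilbert space $\ell^2(G)$ and use the geometry of the amalgamated product group to pin down $\mathrm{im}(1-h)$, then show the operator $f(s)k+g(t)l$ is injective there. First I would note that $h$ is the orthogonal projection onto the $\langle r\rangle$-invariant vectors in $\ell^2(G)$, so $\mathrm{im}(1-h)=(\ker(1-h))^\perp = h\ell^2(G)^\perp$ is the closed span of vectors of the form $\xi - r^j\xi$; equivalently, it is the orthogonal complement of the functions that are constant on right cosets of $\langle r\rangle$. Since $G=\bZ_m*_{\bZ_d}\bZ_n$ has $\langle r\rangle\cong\bZ_d$ as its amalgamated (central, finite) subgroup, the right cosets $G/\langle r\rangle$ organize $\ell^2(G)$ into a direct sum indexed by coset representatives, and $1-h$ acts blockwise. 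The relevant relations $f(s)k=k f(s)$-type identities coming from $1-h = k(1-s^{-1})$ and $1-h=l(1-t^{-1})$, together with $p=\frac{d}{m}hf(s)$ and $q=\frac{d}{n}hg(t)$, should be the algebraic backbone.

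Next I would try to compute $f(s)k + g(t)l$ more explicitly, or at least compute $(f(s)k+g(t)l)^*(f(s)k+g(t)l)$ or the relevant composition with $1-h$, using the factorizations in (\ref{s1}) and the two displayed factorizations of $1-h$. A natural move is to relate $f(s)k$ and $g(t)l$ back to $1-h$: since $f(s)(1-s^{-1})$-type telescoping gives $f(s)(1-s) = 1-s^{m/d} = 1-r$ (up to the harmless $s^{-m/d}$ factor already discussed), one expects identities like $f(s)(1-s^{-1}) = $ something proportional to $1-r^{-1}$, hence $f(s)\cdot(1-h) = f(s)(1-s^{-1})k$ can be rewritten. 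The goal is to show that on $\mathrm{im}(1-h)$ the operator $f(s)k+g(t)l$ has a bounded inverse, or at least trivial kernel; concretely, suppose $(f(s)k+g(t)l)\eta = 0$ with $\eta\in\mathrm{im}(1-h)$, apply $1-h$ (which acts as the identity on $\mathrm{im}(1-h)$, as it is a projection) and the various idempotents $p,q,h$ to extract a contradiction unless $\eta=0$. Multiplying the relation by $(1-s^{-1})$ on the left and using $(1-s^{-1})f(s)k = (1-h)k$-type simplifications, and similarly on the $t$-side, should reduce everything to a statement about the single projection $h$ and the free-product structure.

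The cleanest route may be to pass to the free group $F$ and use that the amalgamated product is built from $F=\bZ*\bZ$ modulo the relations $s^m=1$, $t^n=1$, $s^{m/d}=t^{n/d}$; but I expect the most efficient argument stays in $\ell^2(G)$ and exploits that $G$ acts on its Bass–Serre tree, so $\ell^2(G)$ decomposes according to the tree and $f(s)$, $g(t)$ act as ``averaging toward the vertex stabilizers.'' On a vector supported away from the fixed structure of $\langle r\rangle$, the two averaging operators $f(s)k$ and $g(t)l$ have ``orthogonal'' supports in a suitable sense (one is adapted to the $\bZ_m$-vertex, the other to the $\bZ_n$-vertex), forcing both summands to vanish separately, and then injectivity of each of $f(s)$ and $g(t)$ restricted to the appropriate subspace finishes it.

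\textbf{Main obstacle.} The hard part will be making the ``orthogonal supports'' intuition rigorous: $f(s)k$ and $g(t)l$ do not have literally orthogonal ranges, so one cannot simply split $(f(s)k+g(t)l)\eta=0$ into $f(s)k\eta=0$ and $g(t)l\eta=0$. I expect one must instead track how these operators interact with the coset decomposition $G/\langle r\rangle$ and with the normal-form (reduced word) description of elements of $G$, showing that a nonzero $\eta\in\mathrm{im}(1-h)$ which is killed would have to be supported on an empty set of cosets. Controlling the ``boundary'' cosets — those containing powers of $s$ or of $t$, where $f(s)k$ and $g(t)l$ genuinely overlap — is where the real work lies, and it will likely require the explicit telescoping identities for $f(s)(1-s^{-1})$, $g(t)(1-t^{-1})$ in terms of $1-h$ together with the injectivity of $1-s$ and $1-t$ on $\ell^2(G)$ (true since $s,t$ have infinite-order-free behaviour off their finite cyclic factors — more precisely since $G$ is torsion-free-by-finite and these elements generate infinite subgroups once we are away from $\langle r\rangle$).
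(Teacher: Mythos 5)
There is a genuine gap, and it sits exactly where you place your ``main obstacle'': you never supply the mechanism that lets one handle the interaction of the two summands $f(s)k$ and $g(t)l$, and the tree/coset ``orthogonal supports'' route you sketch is not carried out (nor is it how such an argument would naturally close, since, as you note yourself, the ranges are not orthogonal). The paper's proof needs no Bass--Serre or normal-form analysis at all; it is a short operator factorization. On the invariant subspace $\mathrm{im}(1-h)$ the operators $1-s^{\pm 1}$ and $1-t^{\pm 1}$ are invertible (their kernels are $\mathrm{im}\,p$ and $\mathrm{im}\,q$, which lie inside $\mathrm{im}\,h$, and $s,t$ have finite order so $1$ is isolated in their spectra), hence $k=(1-s^{-1})^{-1}$ and $l=(1-t^{-1})^{-1}$ there. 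Using the telescoping identity $f(s)(1-s)=g(t)(1-t)=1-r$ one writes
\begin{equation*}
f(s)k+g(t)l=f(s)(1-s)\bigl[(1-s)^{-1}(1-s^{-1})^{-1}+(1-t)^{-1}(1-t^{-1})^{-1}\bigr],
\end{equation*}
and since $1-r$ (hence $f(s)(1-s)$) is invertible on $\mathrm{im}(1-h)$, everything reduces, after one more factorization, to showing that $(1-s^{-1})(1-s)+(1-t^{-1})(1-t)$ has trivial kernel on $\ell^2(G)$; positivity of each summand forces $(1-s)c=(1-t)c=0$, i.e.\ $c$ is $G$-invariant, so $c=0$ because $G$ is infinite. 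None of these steps (invertibility on $\mathrm{im}(1-h)$, pulling out the common factor $1-r$, the positivity argument) appear in your proposal, so it does not yet constitute a proof.

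Two smaller inaccuracies you should fix if you pursue your own route: $1-s$ and $1-t$ are \emph{not} injective on $\ell^2(G)$ (their kernels are the nonzero subspaces $\mathrm{im}\,p$ and $\mathrm{im}\,q$; injectivity only holds after restricting to $\mathrm{im}(1-p)$, resp.\ $\mathrm{im}(1-q)$, which contain $\mathrm{im}(1-h)$), and $s,t$ generate finite cyclic subgroups, so the justification ``these elements generate infinite subgroups once we are away from $\langle r\rangle$'' does not make sense as stated. The only place infiniteness of a group enters the correct argument is that $G$ itself is infinite, which kills the constant ($G$-invariant) vectors.
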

  \begin{proof}
   The case of $d=1$ is trivial since $h=1$ and we have set $k=l=0$.
   Thus, without loss of generality, we assume $d\geq 2$.
   Throughout this proof, we restrict all considered operators to the invariant subspace $\mathrm{im}(1-h)$ without further explanation, where the invariance is immediate. 
   Especially, the restricted operators $1-s, 1-s^{-1}, 1-t$, and $1-t^{-1}$ are all invertible. Then, we have
  	\begin{align}
  		&f(s)k+g(t)l \notag \\
  		&=f(s)(1-s^{-1})^{-1}+g(t)(1-t^{-1})^{-1} \notag\\
  		&=f(s)(1-s^{-1})^{-1}+g(t)(1-t)(1-t)^{-1}(1-t^{-1})^{-1} \label{s7}.
  	\end{align}
    Furthermore, observing that
  	\begin{equation*}
  		f(s)(1-s)=g(t)(1-t)=1-r,
  	\end{equation*}
    we may substitute $g(t)(1-t)$ with $f(s)(1-s)$ in the formula (\ref{s7}), which yields
    \begin{align}
    	&f(s)k+g(t)l \notag\\
        &=f(s)(1-s^{-1})^{-1}+f(s)(1-s)(1-t)^{-1}(1-t^{-1})^{-1}\notag\\
    	&=f(s)[(1-s^{-1})^{-1}+(1-s)(1-t)^{-1}(1-t^{-1})^{-1}]\notag\\
    	&=f(s)(1-s)[(1-s)^{-1}(1-s^{-1})^{-1}+(1-t)^{-1}(1-t^{-1})^{-1}]\label{s9}.
    \end{align}
    Next, observe that
    \begin{equation*}
        1-h=\frac{1}{d}\left[ (1-r)+\cdots +(1-r^{d-1})\right],\quad f(s)(1-s)=1-r,
    \end{equation*}
    which implies that $f(s)(1-s)$ is invertible on $\mathrm{im}(1-h)$. 
    Consequently, by equality (\ref{s9}), the conclusion of the lemma reduces to proving
  	\begin{equation*}
  		\mathrm{ker}\left((1-s)^{-1}(1-s^{-1})^{-1}+(1-t)^{-1}(1-t^{-1})^{-1}\right)|_{\mathrm{im}(1-h)}=0.
  	\end{equation*}
    Moreover, since we have the operator factorization:
  	\begin{align*}
  		&(1-s)^{-1}(1-s^{-1})^{-1}+(1-t)^{-1}(1-t^{-1})^{-1}\\
  		&=(1-s)^{-1}(1-s^{-1})^{-1}[(1-t^{-1})(1-t)+(1-s^{-1})(1-s)](1-t)^{-1}(1-t^{-1})^{-1},
  	\end{align*}
    it suffices to prove that 
  	\begin{equation*}
  		\mathrm{ker}\left((1-t^{-1})(1-t)+(1-s^{-1})(1-s)\right)|_{\ell^2(G)}=0,
  	\end{equation*}
    which follows directly from the argument below.
    Indeed, suppose $c\in \ell^2(G)$ satisfies
  	\begin{equation*}
  		(1-t^{-1})(1-t)c+(1-s^{-1})(1-s)c=0.
  	\end{equation*}
    Since the operators $(1-t^{-1})(1-t)$ and $(1-s^{-1})(1-s)$ are both positive definite, we deduce
    \begin{equation*}
	\left\{\begin{matrix}
		(1-t)c=0\\
		(1-s)c=0.
	\end{matrix} \right.
   \end{equation*}
    The above system of equations implies $c=0$ because the constant function in $\ell^2(G)$ is trivial, as $G$ is an infinite group by our assumptions about $d,m,n$.
    The conclusion follows. 
  \end{proof}

    Equipped with the above technical preparation, we now establish our main result on the calculation of $\mathrm{ker}\Delta_1$. 
    Although the proof follows from direct computation, it requires careful analysis of several technical details.
  \begin{lemma}
  \label{lemma:ker}
    Let $G=\bZ_m*_{\bZ_d}\bZ_n$ with $d|m$, $d|n$, $d\neq m$, $d\neq n$, $m \geq 2$ and $n\geq 3$.
    For $x\in \ell^2(G)^{\oplus2}$, $\Delta_1x=0$ if and only if there exist $z\in \ell^2(G)^{\oplus2}$ and $a\in \mathrm{im}h \cap \mathrm{im}(1-p) \cap \mathrm{im}(1-q)$ such that
    \begin{equation*}
    	x=(I-C)z,\quad \text{and} \quad \left[\begin{matrix}
    		1-s^{-1} & \ \\
    		\ & 1-t^{-1}
    	\end{matrix}\right]z=\left[\begin{matrix}
    	a \\
        -a
         \end{matrix}\right].
    \end{equation*}
  \end{lemma}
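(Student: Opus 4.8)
The plan is to reduce $\Delta_1 x = 0$ to a linear system and then recognise the asserted parametrisation. Since $\Delta_1 = d_0 d_0^* + d_1^* d_1$ is a sum of positive operators, $\Delta_1 x = 0$ is equivalent to $d_0^* x = 0$ together with $d_1 x = 0$. Writing $x$ in components $x_1,x_2 \in \ell^2(G)$, the first reads $(1-s^{-1})x_1 + (1-t^{-1})x_2 = 0$, and the second, after using $\sum_{i=0}^{m-1}s^i = mp$ and $\sum_{j=0}^{n-1}t^j = nq$, becomes $px_1 = 0$, $qx_2 = 0$ and $f(s)x_1 = g(t)x_2$. Throughout I use freely the identities $sp = p$, $tq = q$, $ph = p$, $qh = q$, $f(s)p = f(s)h = \frac{m}{d}p$, $g(t)q = g(t)h = \frac{n}{d}q$, $f(s)(1-s) = g(t)(1-t) = 1-r$, and the invertibility of $1-s^{-1}$ (resp. $1-t^{-1}$) on $\mathrm{im}(1-p) = \mathrm{ker}\,p$ (resp. on $\mathrm{ker}\,q$), which holds because $s$ and $t$ have finite order. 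I also take $k$ and $l$ to be the polynomials in $s$ and in $t$ supplied by the displayed factorisations of $1-h$ (using $1-s^{jm/d} = -s(1+s+\cdots+s^{jm/d-1})(1-s^{-1})$, and likewise for $t$); then $f(s)k$ and $g(t)l$, hence $f(s)k+g(t)l$, commute with $h$.

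For the forward implication, assume $\Delta_1 x = 0$. Since $px_1 = 0$ and $qx_2 = 0$ we have $(I-C)x = x$, so we set $z := x$ and $a := (1-s^{-1})x_1 = -(1-t^{-1})x_2$ (the last equality being $d_0^* x = 0$); the two displayed equations for $z$ then hold by construction. From $p(1-s^{-1}) = 0$ and $q(1-t^{-1}) = 0$ we get $pa = qa = 0$, so it remains only to prove $a \in \mathrm{im}\,h$. Put $w := f(s)x_1 = g(t)x_2$; since $f(s)h = \frac{m}{d}p$ and $px_1 = 0$ we have $w = f(s)x_1 = f(s)(1-h)x_1$, and similarly $w = g(t)(1-h)x_2$. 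Using $1-h = k(1-s^{-1}) = l(1-t^{-1})$ gives $(1-h)x_1 = ka$ and $(1-h)x_2 = -la$, hence $f(s)ka = w = -g(t)la$, i.e. $(f(s)k + g(t)l)a = 0$.

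Decomposing $a = ha + (1-h)a$ and using that $f(s)k + g(t)l$ commutes with $h$, we obtain $(f(s)k + g(t)l)(1-h)a = (1-h)(f(s)k+g(t)l)a = 0$; since $(1-h)a \in \mathrm{im}(1-h)$, Lemma~\ref{lemma:restricted kernel trivial} forces $(1-h)a = 0$, that is, $a \in \mathrm{im}\,h$ (the case $d = 1$ is vacuous, as then $h = 1$ and $k = l = 0$). For the reverse implication, given $z$ and $a \in \mathrm{im}\,h \cap \mathrm{im}(1-p) \cap \mathrm{im}(1-q)$ with $x = (I-C)z$ and $(1-s^{-1})z_1 = a = -(1-t^{-1})z_2$, write $x_1 = (1-p)z_1$ and $x_2 = (1-q)z_2$; then $px_1 = qx_2 = 0$, while $(1-s^{-1})x_1 = (1-s^{-1})z_1 = a$ and $(1-t^{-1})x_2 = -a$ (since $(1-s^{-1})p = (1-t^{-1})q = 0$), so $d_0^* x = 0$ and the first two components of $d_1 x$ vanish. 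Finally $f(s)a = f(s)ha = \frac{m}{d}pa = 0$, and since $f(s)(1-s^{-1}) = -s^{-1}f(s)(1-s) = -s^{-1}(1-r)$ we get $f(s)(1-s^{-1})x_1 = f(s)a = 0$, whence $(1-r)x_1 = 0$, i.e. $x_1 = hx_1$; therefore $f(s)x_1 = f(s)hx_1 = \frac{m}{d}px_1 = 0$, and symmetrically $g(t)x_2 = 0$, so $d_1 x = 0$ and $\Delta_1 x = 0$.

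The only subtle step is the passage $(f(s)k+g(t)l)a = 0 \Rightarrow a \in \mathrm{im}\,h$ in the forward direction; this is exactly where Lemma~\ref{lemma:restricted kernel trivial} is used, and it applies only because $f(s)k + g(t)l$ commutes with $h$, so the relation may be restricted to the $\mathrm{im}(1-h)$-component of $a$. The remaining steps are routine manipulations with the projections $p$, $q$, $h$, the telescoping identities for $f(s)$ and $g(t)$, and the invertibility of $1-s^{\pm1}$, $1-t^{\pm1}$ off the relevant fixed-point subspaces.
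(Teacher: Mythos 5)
Your proof is correct and reaches the stated equivalence by a genuinely more direct route than the paper's. The decisive simplification is the positivity splitting: since $\Delta_1=d_0d_0^*+d_1^*d_1$ with both summands positive, $\Delta_1x=0$ is equivalent to $d_0^*x=0$ and $d_1x=0$, which reads off as the first-order system $px_1=qx_2=0$, $(1-s^{-1})x_1+(1-t^{-1})x_2=0$, $f(s)x_1=g(t)x_2$; in particular $Cx=0$, so $z=x$ is already a witness, and the only substantive point is that $a:=(1-s^{-1})x_1$ lies in $\mathrm{im}\,h$, which you obtain from $(f(s)k+g(t)l)a=0$ together with Lemma \ref{lemma:restricted kernel trivial}. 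The details you rightly made explicit are needed here: $k$ and $l$ must be taken as the polynomial expressions in $s$ and in $t$ supplied by the factorizations of $1-h$, so that $f(s)k+g(t)l$ commutes with $h$ and the relation can be restricted to the $(1-h)$-component of $a$. The paper instead stays with the second-order equation throughout: it applies $C$ to $\Delta_1x=0$ and diagonalizes a positive-definite $2\times 2$ scalar matrix to force $x\in\mathrm{im}(I-C)$, substitutes $x=(I-C)z$, and then decomposes the resulting equation along $h\ell^2(G)\oplus(1-h)\ell^2(G)$, using the same Lemma \ref{lemma:restricted kernel trivial} to kill the $(1-h)$-part. Both arguments hinge on that lemma; yours is shorter and avoids the auxiliary unitaries and the separate analysis of $hz$ versus $(1-h)z$, and your converse (deducing $(1-r)x_1=(1-r)x_2=0$, hence $x_1,x_2\in\mathrm{im}\,h$ and $f(s)x_1=g(t)x_2=0$) is a clean check; the paper's formulation has the minor advantage of producing the characterization directly in terms of the variable $z$, which is the form subsequently exploited in the proof of Lemma \ref{lemma:H1}.
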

  \begin{proof}
  	Suppose $x\in \mathrm{ker}\Delta_1$. First, by applying $C$ to $\Delta_1x=0$, since $ps=p$ and $qt=q$, we have
  	\begin{equation}
  		\label{eq1}
  		\left[\begin{matrix}
  			m^2 & \ \\
  			\ & n^2
  		\end{matrix}\right]Cx+ \left[\begin{matrix}
  		-\frac{m}{d}p & \ \\
  		\ & \frac{n}{d}q
  	\end{matrix}\right]\left[\begin{matrix}
  	1 & 1 \\
  	1 & 1
  \end{matrix}\right]\left[\begin{matrix}
  -f(s) & \ \\
  \ & g(t)
  \end{matrix}\right]x=0.
  	\end{equation}
  	Then, by equality (\ref{s1}), we obtain
  	\begin{equation*}
  		pg(t)=\frac{d}{m}hf(s)g(t)=\frac{d}{m}h^2f(s)g(t)=\frac{n}{d}pq,
  	\end{equation*}
  	and similarly,
  	\begin{equation*}
  		qf(s)=\frac{m}{d}qp.
  	\end{equation*}
  	Thus, equation (\ref{eq1}) is equivalent to
  	\begin{equation*}
  		\left[\begin{matrix}
  			\left(\left(\frac{m}{d}\right)^2+m^2\right)p & -\frac{mn}{d^2}pq \\
  			-\frac{mn}{d^2}qp & \left(\left(\frac{n}{d}\right)^2+n^2\right)q
  		\end{matrix}\right]x=C\left[\begin{matrix}
  			\left(\left(\frac{m}{d}\right)^2+m^2\right) & -\frac{mn}{d^2}\\
  			-\frac{mn}{d^2} & \left(\left(\frac{n}{d}\right)^2+n^2\right)
  		\end{matrix}\right]Cx=0.
  	\end{equation*}
  	Furthermore, since there exists a unitary matrix $U_1$ such that 
  	\begin{equation*}
  		\left[\begin{matrix}
  			\left(\left(\frac{m}{d}\right)^2+m^2\right) & -\frac{mn}{d^2}\\
  			-\frac{mn}{d^2} & \left(\left(\frac{n}{d}\right)^2+n^2\right)
  		\end{matrix}\right]={U_1}^*\left[\begin{matrix}
  		\lambda_1 & \  \\
  		\  & \lambda_2
  	\end{matrix}\right]U_1,
  	\end{equation*}
  	where $\lambda_1, \lambda_2>0$ are eigenvalues of the real symmetric matrix on the left-hand side, equation (\ref{eq1}) is then equivalent to
  	\begin{equation*}
  		x\in \mathrm{ker}\left(\left[\begin{matrix}
  			\sqrt{\lambda_1} & \  \\
  			\  & \sqrt{\lambda_2}
  		\end{matrix}\right]U_1C\right)=\mathrm{ker}C=\mathrm{im}(I-C).
  	\end{equation*}
  	Therefore, equation (\ref{eq1}) holds for $x$ if and only if there exists $z\in \ell^2(G)^{\oplus2}$ such that
  	\begin{equation}
  		\label{ss3}
  		x=\mathrm{im}(I-C)z.
  	\end{equation}
  	Hence $\Delta_1x=0$ if and only if there exists $z$ such that
  	\begin{equation}
        \label{eq7}
        \begin{split}
  		\left[\begin{matrix}
  			1-s & \ \\
  			\ & 1-t
  		\end{matrix}\right]\left[\begin{matrix}
  			1 & 1 \\
  			1 & 1
  		\end{matrix}\right]&\left[\begin{matrix}
  			1-s^{-1} & \ \\
  			\ & 1-t^{-1}
  		\end{matrix}\right](I-C)z\\ &+ \left[\begin{matrix}
  			-f(s)^* & \ \\
  			\ & g(t)^*
  		\end{matrix}\right]\left[\begin{matrix}
  			1 & 1 \\
  			1 & 1
  		\end{matrix}\right]\left[\begin{matrix}
  			-f(s) & \ \\
  			\ & g(t)
  		\end{matrix}\right](I-C)z=0.
        \end{split}
  	\end{equation}
    Moreover, since equality (\ref{s1}) implies that
    \begin{equation*}
    	f(s)(1-p)=f(s)-\frac{m}{d}p=f(s)-hf(s)=(1-h)f(s),
    \end{equation*}
    and similarly,
  	\begin{equation*}
  		g(t)(1-q)=(1-h)g(t),
  	\end{equation*}
    together with equation (\ref{eq7}), we obtain that $\Delta_1x=0$ if and only if there exists $z$ satisfying
    \begin{equation}
    	\label{eq2}
    	\begin{split}
    			\left[\begin{matrix}
    				1-s & \ \\
    				\ & 1-t
    			\end{matrix}\right]\left[\begin{matrix}
    				1 & 1 \\
    				1 & 1
    			\end{matrix}\right]&\left[\begin{matrix}
    				1-s^{-1} & \ \\
    				\ & 1-t^{-1}
    			\end{matrix}\right]z\\ &+
    			\left[\begin{matrix}
    				-(1-h)f(s)^* & \ \\
    				\ & (1-h)g(t)^*
    			\end{matrix}\right]\left[\begin{matrix}
    				1 & 1 \\
    				1 & 1
    			\end{matrix}\right]\left[\begin{matrix}
    				-f(s) & \ \\
    				\ & g(t)
    			\end{matrix}\right]z=0.
    		\end{split}
    \end{equation}

    Next, we aim to solve the above equation (\ref{eq2}) for $z\in \ell^2(G)^{\oplus2}$.
    By applying $\left[\begin{matrix}
  		h & \ \\
  		\ & h
  	\end{matrix}\right]$ to equation (\ref{eq2}), we obtain
    \begin{equation}
    	\label{eq3}
    	\left[\begin{matrix}
    		h & \ \\
    		\ & h
    	\end{matrix}\right] \left[\begin{matrix}
    	1-s & \ \\
    	\ & 1-t
    \end{matrix}\right]\left[\begin{matrix}
    1 & 1 \\
    1 & 1
   \end{matrix}\right]\left[\begin{matrix}
   1-s^{-1} & \ \\
   \ & 1-t^{-1}
   \end{matrix}\right] \left[\begin{matrix}
    	h & \ \\
    	\ & h
    \end{matrix}\right]z=0.
    \end{equation}
   Then we consider the matrix decomposition:
   \begin{equation}
    	\label{s4}
    	\left[\begin{matrix}
    	1 & 1 \\
    	1 & 1
    \end{matrix}\right]=U\left[\begin{matrix}
    		2 & 0 \\
    		0 & 0 
    	\end{matrix}\right]U^*,
    \end{equation}
    where the unitary matrix $U$ is defined by 
  	\begin{equation*}
  		U=\frac{1}{\sqrt{2}}\left[\begin{matrix}
  			1 & 1 \\
  			1 & -1
  		\end{matrix}\right].
  	\end{equation*}
  	Thus, equation (\ref{eq3}) is equivalent to 
  	\begin{equation*}
  		\left[\begin{matrix}
  			\sqrt{2} & 0 \\
  			0 & 0 
  		\end{matrix}\right]U^*\left[\begin{matrix}
  		1-s^{-1} & \ \\
  		\ & 1-t^{-1}
  	\end{matrix}\right]\left[\begin{matrix}
  	h & \ \\
  	\ & h
    \end{matrix}\right]z=0,
  	\end{equation*}
  	which holds if and only if there exists $a\in \mathrm{im}h \cap \mathrm{im}(1-p) \cap \mathrm{im}(1-q)$ such that
  	\begin{equation}
  		\label{ss2}
  		\left[\begin{matrix}
  			1-s^{-1} & \ \\
  			\ & 1-t^{-1}
  		\end{matrix}\right]hz=\frac{1}{\sqrt{2}}\left[\begin{matrix}
  			a \\
  			-a
  		\end{matrix}\right].
  	\end{equation}
    Back to equation (\ref{eq2}), suppose there exists $a\in \mathrm{im}h \cap \mathrm{im}(1-p) \cap \mathrm{im}(1-q)$ such that equation (\ref{ss2}) holds, and consequently equation (\ref{eq3}) holds.
    Then under the space decomposition
    \begin{equation*}\ell^2(G)=h\ell^2(G)\oplus(1-h)\ell^2(G),
    \end{equation*} 
    we obtain that equation (\ref{eq2}) is equivalent to
  	\begin{align*}	&\left[\begin{matrix}
  			(1-h)(1-s) & \ \\
  			\ & (1-h)(1-t)
  		\end{matrix}\right]\left[\begin{matrix}
  			1 & 1 \\
  			1 & 1
  		\end{matrix}\right]\left[\begin{matrix}
  			(1-h)(1-s^{-1}) & \ \\
  			\ & (1-h)(1-t^{-1})
  		\end{matrix}\right]z\\ 
  	&+
  		\left[\begin{matrix}
  			-(1-h)f(s)^* & \ \\
  			\ & (1-h)g(t)^*
  		\end{matrix}\right]\left[\begin{matrix}
  			1 & 1 \\
  			1 & 1
  		\end{matrix}\right]\left[\begin{matrix}
  			-(1-h)f(s) & \ \\
  			\ & (1-h)g(t)
  		\end{matrix}\right]z=0.
  	\end{align*}
   Furthermore, clearly the above equation is equivalent to the following system of equations
  	\begin{numcases}{}
  		\left[\begin{matrix}
  			(1-h)(1-s) & \ \\
  			\ & (1-h)(1-t)
  		\end{matrix}\right]\left[\begin{matrix}
  			1 & 1 \\
  			1 & 1
  		\end{matrix}\right]\left[\begin{matrix}
  			(1-h)(1-s^{-1}) & \ \\
  			\ & (1-h)(1-t^{-1})
  		\end{matrix}\right]z=0 \label{eq4} \\
  		\left[\begin{matrix}
  			-(1-h)f(s)^* & \ \\
  			\ & (1-h)g(t)^*
  		\end{matrix}\right]\left[\begin{matrix}
  			1 & 1 \\
  			1 & 1
  		\end{matrix}\right]\left[\begin{matrix}
  			-(1-h)f(s) & \ \\
  			\ & (1-h)g(t)
  		\end{matrix}\right]z=0 \label{eq5}
  	.\end{numcases}
  On the one hand, by equality $(\ref{s4})$, clearly equation $(\ref{eq4})$ is equivalent to 
  \begin{equation*}
  	\left[\begin{matrix}
  		\sqrt{2} & 0 \\
  		0 & 0 
  	\end{matrix}\right]U^*\left[\begin{matrix}
  		1-s^{-1} & \ \\
  		\ & 1-t^{-1}
  	\end{matrix}\right]\left[\begin{matrix}
  		1-h & \ \\
  		\ & 1-h
  	\end{matrix}\right]z=0,
  \end{equation*}
  which holds if and only if there exists $b\in \mathrm{im}(1-h)\cap \mathrm{im}(1-p)\cap\mathrm{im}(1-q)=\mathrm{im}(1-h)$ such that 
  \begin{equation}
  	\label{s5}
  	\left[\begin{matrix}
  		1-s^{-1} & \ \\
  		\ & 1-t^{-1}
  	\end{matrix}\right](1-h)z=\frac{1}{\sqrt{2}}\left[\begin{matrix}
  		b \\
  		-b
  	\end{matrix}\right].
  \end{equation}
  On the other hand, similarly, equation $(\ref{eq5})$ is equivalent to 
  \begin{equation*}
  	\left[\begin{matrix}
  		\sqrt{2} & 0 \\
  		0 & 0 
  	\end{matrix}\right]U^*\left[\begin{matrix}
  	-(1-h)f(s) & \ \\
  	\ & (1-h)g(t)
  \end{matrix}\right]z=0.
  \end{equation*}
  We need to conduct a deeper analysis of the above equation.
  Recall that $1-h=k(1-s^{-1})=(1-s^{-1})k$ for some $k$, and $1-h=l(1-t^{-1})=(1-t^{-1})l$ for some $l$. Assuming the existence of $b\in \mathrm{im}(1-h)$ such that equation $(\ref{s5})$ holds, we obtain
  \begin{align*}
  	&\left[\begin{matrix}
  		\sqrt{2} & 0 \\
  		0 & 0 
  	\end{matrix}\right]U^*\left[\begin{matrix}
  		-(1-h)f(s) & \ \\
  		\ & (1-h)g(t)
  	\end{matrix}\right]z\\
    =&\left[\begin{matrix}
    	\sqrt{2} & 0 \\
    	0 & 0 
    \end{matrix}\right]U^*\left[\begin{matrix}
    	-(1-h)f(s) & \ \\
    	\ & (1-h)g(t)
    \end{matrix}\right]\left[\begin{matrix}
        k(1-s^{-1}) & \ \\
        \ & l(1-t^{-1})
    \end{matrix}\right](1-h)z\\
    =&\left[\begin{matrix}
    	1 & 0 \\
    	0 & 0 
    \end{matrix}\right]U^*\left[\begin{matrix}
    	-(1-h)f(s)k & \ \\
    	\ & (1-h)g(t)l
    \end{matrix}\right]\left[\begin{matrix}
        b \\
        -b
    \end{matrix}\right],
  \end{align*}
  which implies that at this time, equation $(\ref{eq5})$ is equivalent to 
  \begin{equation*}
  	\left[\begin{matrix}
  		1 & 0 \\
  		0 & 0 
  	\end{matrix}\right]U^*\left[\begin{matrix}
  		-(1-h)f(s)k & \ \\
  		\ & (1-h)g(t)l
  	\end{matrix}\right]\left[\begin{matrix}
  		b \\
  		-b
  	\end{matrix}\right]=0,
  \end{equation*}
  hence equivalent to
  \begin{equation}
  	\label{eq6}
   (f(s)k+g(t)l)(1-h)b=0.
  \end{equation}
  However, according to Lemma \ref{lemma:restricted kernel trivial}, the kernel of $f(s)k+g(t)l$ restricted to $\mathrm{im}(1-h)$ is trivial, thus equation $(\ref{eq6})$ holds if and only if 
  \begin{equation*}
  	(1-h)b=0.
  \end{equation*}
  Here notice that the above equation together with the condition $b\in \mathrm{im}(1-h)$ implies that $b$ vanishes.
  Finally, by equality $(\ref{s5})$ and the above discussion, we obtain that equations (\ref{eq4}) and (\ref{eq5}) both hold if and only if
  \begin{equation}
  	\label{s6}
  	 \left[\begin{matrix}
  	 	1-s^{-1} & \ \\
  	 	\ & 1-t^{-1}
  	 \end{matrix}\right](1-h)z=0.
  \end{equation}
  Therefore, equation (\ref{eq2}) holds if and only if there exists $a\in \mathrm{im}h \cap \mathrm{im}(1-p) \cap \mathrm{im}(1-q)$ such that equations (\ref{ss2}) and (\ref{s6}) both hold.

  Therefore, by equations (\ref{ss3}), (\ref{ss2}) and (\ref{s6}), the conclusion follows. Notice that the coefficient ${1}/{\sqrt{2}}$ in (\ref{ss2}) can be absorbed by rescaling $a\in \mathrm{im}h \cap \mathrm{im}(1-p) \cap \mathrm{im}(1-q) $.
  \end{proof}

\section{$K$-class of the higher Kazhdan projection for $\bZ_m*_{\bZ_d}\bZ_n$}
\label{section4}
 In this section, we establish the proof of our main result, Theorem \ref{main th}, which gives an explicit expression of the $K$-theory class $[p_1]\in \mathrm{K_0(C^*_r}(G))$ of the first higher Kazhdan projection for $G=\bZ_m*_{\bZ_d}\bZ_n$.
 The proof is based on several technical lemmas concerning orthogonal direct sum decompositions within multiple copies of $\ell^2(G)$, which we shall present first.
 These lemmas are straightforward extensions of the techniques from Pooya and Wang's work \cite{Pooya-Wang} to our setting.
  \begin{lemma}
  \label{lemma:H1}
   The first higher Kazhdan projection $p_1$, which is precisely the orthogonal projection onto $\mathrm{ker}\Delta_1$, shares the same $K$-theory class as the projection $P_{H_1}$ onto the subspace 
  	\begin{equation*}
  		H_1=\left\{\left[\begin{matrix}
  			a \\
  			-a
  		\end{matrix}\right] |\  a\in\mathrm{im}h \cap \mathrm{im}(1-p) \cap \mathrm{im}(1-q) \right\},
  	\end{equation*}
  	that is,
  	\begin{equation*}
  		[p_1]=[P_{H_1}]\in \mathrm{K_0(C^*_r}(G)).
  	\end{equation*}
  \end{lemma}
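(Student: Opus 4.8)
The plan is to construct an explicit partial isometry $v\in\mathrm{M}_2(\mathrm{C^*_r}(G))$ with $v^*v=p_1$ and $vv^*=P_{H_1}$; a Murray--von Neumann equivalence of projections then immediately gives $[p_1]=[P_{H_1}]$ in $\mathrm{K_0(C^*_r}(G))$. The first step is to rewrite $\mathrm{ker}\Delta_1$ in a form free of the auxiliary vector $z$ appearing in Lemma~\ref{lemma:ker}. Writing $C=\diag(p,q)$ and $T=\diag(1-s^{-1},1-t^{-1})$ as operators on $\ell^2(G)^{\oplus2}$, the relations $ps=sp=p$, $qt=tq=q$ yield $TC=CT=0$; combining this with Lemma~\ref{lemma:ker} one checks that for $x\in\ell^2(G)^{\oplus2}$ one has $\Delta_1x=0$ if and only if $Cx=0$ and $Tx\in H_1$. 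Since $\mathrm{ker}\,C=\mathrm{im}(1-p)\oplus\mathrm{im}(1-q)$ (in the two coordinates) and $H_1\subseteq\mathrm{ker}\,C$, this reads $\mathrm{ker}\Delta_1=(T|_{\mathrm{ker}\,C})^{-1}(H_1)$, and in particular $T$ maps $\mathrm{ker}\Delta_1$ onto $H_1$.

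The crucial --- and, I expect, the only genuinely nontrivial --- step is that $T$ restricts to a bounded bijection of $\mathrm{ker}\,C$ onto itself with bounded inverse. The operator $T$ is block diagonal and $1-s^{-1}$ preserves $\mathrm{im}(1-p)$; what one must verify is that $(1-s^{-1})^*(1-s^{-1})=2-s-s^{-1}$ is bounded below on $\mathrm{im}(1-p)$. This holds because $s$ has finite order $m$ and $p$ is exactly the averaging projection onto the $\langle s\rangle$-invariant vectors, so on $\mathrm{im}(1-p)$ the spectrum of $2-s-s^{-1}$ is contained in $[\,2-2\cos\tfrac{2\pi}{m},\,4\,]$, a compact subset of $(0,\infty)$; the same argument applies to $1-t^{-1}$ on $\mathrm{im}(1-q)$. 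Hence $T|_{\mathrm{ker}\,C}$ is invertible with bounded inverse, and since $H_1\subseteq\mathrm{ker}\,C$ it restricts further to a bounded bijection $\mathrm{ker}\Delta_1\to H_1$ with bounded inverse; in particular $T$ is bounded below on $\mathrm{ker}\Delta_1$. I would isolate this spectral-gap observation as a short preliminary claim.

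With this in hand, set $w=TP_{\mathrm{ker}\Delta_1}\in\mathrm{M}_2(\mathrm{C^*_r}(G))$, using that $s,t$ are unitaries of $\mathrm{C^*_r}(G)$ and that $p_1=P_{\mathrm{ker}\Delta_1}$ lies in $\mathrm{M}_2(\mathrm{C^*_r}(G))$ by Remark~\ref{remark: p1 only nonvaish}. Bounded-belowness of $T$ on $\mathrm{ker}\Delta_1$ gives $w^*w\geq c\,P_{\mathrm{ker}\Delta_1}$ for some $c>0$, so $w^*w$ is invertible in the unital corner $P_{\mathrm{ker}\Delta_1}\mathrm{M}_2(\mathrm{C^*_r}(G))P_{\mathrm{ker}\Delta_1}$. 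Taking the square root and inverse in that corner, $v:=w(w^*w)^{-1/2}$ lies in $\mathrm{M}_2(\mathrm{C^*_r}(G))$ and satisfies $v^*v=P_{\mathrm{ker}\Delta_1}=p_1$, while $vv^*=w(w^*w)^{-1}w^*$ is a projection with range $\mathrm{im}(w)=T(\mathrm{ker}\Delta_1)=H_1$, i.e.\ $vv^*=P_{H_1}$. Therefore $[p_1]=[v^*v]=[vv^*]=[P_{H_1}]$, and as a by-product $P_{H_1}\in\mathrm{M}_2(\mathrm{C^*_r}(G))$. The finite order of $s,t$ is exactly what keeps the polar part $v$ inside $\mathrm{C^*_r}(G)$ rather than merely in the group von Neumann algebra: without it $T$ would have non-closed range and this last step would break down.
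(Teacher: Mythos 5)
Your argument is correct and is essentially the paper's own proof: the paper likewise uses Lemma \ref{lemma:ker} to carry $\mathrm{ker}\Delta_1$ onto $H_1$ by the operator $\diag(1-s^{-1},1-t^{-1})$, which is invertible on $\mathrm{im}(1-p)\oplus\mathrm{im}(1-q)$ precisely because $s,t$ have finite order, and it invokes Remark \ref{remark: p1 only nonvaish} to place $p_1$ in $\mathrm{M}_{2}(\mathrm{C^*_{r}}(G))$ and hence $P_{H_1}$ as well. The only difference is bookkeeping: the paper extends this operator by the identity on $\mathrm{im}p\oplus\mathrm{im}q$ to an explicit invertible $V$ (whose inverse comes from the algebraic factorizations $1-p=k_1(1-s^{-1})$, $1-q=l_1(1-t^{-1})$ rather than your spectral estimate) and concludes via the similarity $p_1=V^{-1}P_{H_1}V$, whereas you take the polar part of $Tp_1$ and conclude via Murray--von Neumann equivalence.
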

  \begin{proof}
  We consider the following factorizations of $1-p$ and $1-q$ for some $k_1$ and $l_1$, respectively:
  \begin{align}
  (1-p)&=(1-s^{-1})k_1=k_1(1-s^{-1}),\label{factorize1}\\
  (1-q)&=(1-t^{-1})l_1=l_1(1-t^{-1})\label{factorize2}.
  \end{align}
  Then by Lemma \ref{lemma:ker}, formulas (\ref{factorize1}) and (\ref{factorize2}), we infer that $\Delta_1x=0$ if and only if 
  	\begin{equation*}
  	x=(I-C)z=\left[\begin{matrix}
  			1-p & \\
  			& 1-q
  		\end{matrix}\right]z=\left[\begin{matrix}
  			k_1 & \\
  			& l_1
  		\end{matrix}\right]\left[\begin{matrix}
  			1-s^{-1} & \\
  			& 1-t^{-1}
  		\end{matrix}\right]z=\left[\begin{matrix}
  			{k_1}a \\
  			{-l_1}a
  		\end{matrix}\right],
  	\end{equation*}
  	for some $z\in \ell^2(G)^{\oplus 2}$ and $a\in\mathrm{im}h \cap \mathrm{im}(1-p) \cap \mathrm{im}(1-q)$. 
    Therefore, we obtain 
  	\begin{equation*}
  		\mathrm{ker}\Delta_1=\left\{\left[\begin{matrix}
  			{k_1}a \\
  			{-l_1}a
  		\end{matrix}\right]|\ a\in \mathrm{im}h \cap \mathrm{im}(1-p) \cap \mathrm{im}(1-q) \right\}.
  	\end{equation*}
    Furthermore, the factorization in formula (\ref{factorize1}) implies that $k_1$ and $1-s^{-1}$ are inverse to each other in $\mathrm{im}(1-p)$, while formula (\ref{factorize2}) implies that $l_1$ and $1-t^{-1}$ are inverse to each other in $\mathrm{im}(1-q)$.
    Construct the following bounded linear operator on $\ell^2(G)^{\oplus 2}$:
  	\begin{equation*}
  		V=\left[\begin{matrix}
  			1 & & & \\
  			& 1-s^{-1} & & \\
  			& & 1 & \\
  			& & & 1-t^{-1}
  		\end{matrix}\right]
  	\end{equation*}
    with respect to the decomposition $\mathrm{im}p\oplus\mathrm{im}(1-p)\oplus \mathrm{im}q\oplus\mathrm{im}(1-q)=\ell^2(G)^{\oplus 2}$.
    Clearly, $V$ is invertible and induces the following relations:
    \begin{equation*}
  		V\mathrm{ker}\Delta_1=H_1,\quad 	p_{1}=V^{-1}P_{H_1}V.
  	\end{equation*}
    Finally, the second relation above, together with the facts that $p_1$ belongs to the $C^*$-subalgebra $\mathrm{M}_{2}(\mathrm{C^*_{r}}(G))$ by Remark \ref{remark: p1 only nonvaish} and $V$ belongs to $\mathrm{M}_{2}(\mathrm{C^*_{r}}(G))$ by construction, implies that $P_{H_1}\in \mathrm{M}_{2}(\mathrm{C^*_{r}}(G))$. 
    The conclusion on $K$-theory classes then follows directly.
  \end{proof}
  
  Next, recalling the subspace $H_1 \subseteq \mathrm{im}h \oplus \mathrm{im}h$ defined in Lemma \ref{lemma:H1}, we construct four more subspaces of $\mathrm{im}h \oplus \mathrm{im}h \subseteq \ell^2(G)^{\oplus 2}$ as follows:
  \begin{align*}
  	H_2&=\left\{\left[\begin{matrix}
  		c \\
  		-c
  	\end{matrix}\right] |\  c\in\mathrm{im}p + \mathrm{im}q \right\},\quad
  	\tilde{H_2}=\left\{\left[\begin{matrix}
  		pw \\
  		qv
  	\end{matrix}\right] |\  v,w\in \ell^2(G) \right\},\\
  	&H_3=\left\{\left[\begin{matrix}
  		b \\
  		b
  	\end{matrix}\right] |\ b\in\mathrm{im}h \right\},\quad
  	\tilde{H_3}=\left\{\left[\begin{matrix}
  		(1-p)b \\
  		(1-q)b
  	\end{matrix}\right] |\ b\in\mathrm{im}h \right\}.
  \end{align*}
  We note that $H_2$ and $\tilde{H_2}$ are both subspaces of $\mathrm{im}h \oplus \mathrm{im}h$, since $\mathrm{im}p$ and $\mathrm{im}q$ are both contained in $\mathrm{im}h$.
  
  \begin{lemma}
  	\label{lemma1}
  	There are orthogonal direct sums of Hilbert spaces:
  	\begin{equation*}
  		\mathrm{im}h \oplus \mathrm{im}h = H_1\oplus H_2\oplus H_3,\quad \mathrm{im}h \oplus \mathrm{im}h = H_1\oplus \tilde{H_2}\oplus \tilde{H_3}.
  	\end{equation*}
  \end{lemma}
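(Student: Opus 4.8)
The plan is to reduce both identities to the single structural fact that $\mathrm{im}p$ and $\mathrm{im}q$ lie inside $\mathrm{im}h$ (immediate from $ph=hp=p$ and $qh=hq=q$), together with the orthogonal splitting of $\mathrm{im}h$ that these two subspaces determine. Since $1-p$ and $1-q$ are orthogonal projections, $\mathrm{im}(1-p)=(\mathrm{im}p)^{\perp}$ and $\mathrm{im}(1-q)=(\mathrm{im}q)^{\perp}$, so the subspace
$$K:=\mathrm{im}h\cap\mathrm{im}(1-p)\cap\mathrm{im}(1-q)$$
occurring in $H_1$ is precisely $\mathrm{im}h\ominus\overline{\mathrm{im}p+\mathrm{im}q}$. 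Following \cite{Pooya-Wang} I would use that $M:=\mathrm{im}p+\mathrm{im}q$ is in fact closed — equivalently, that $\mathrm{im}p$ and $\mathrm{im}q$ form a positive angle, i.e., $\|pq\|<1$; this is the only genuinely analytic input, and I would invoke it rather than reprove it. Granting it, $\mathrm{im}h=M\oplus K$ is an orthogonal direct sum, and moreover $\mathrm{im}p\cap\mathrm{im}q=0$ because $\langle s\rangle$ and $\langle t\rangle$ generate the infinite group $G$, so the only element of $\ell^2(G)$ fixed by both $s$ and $t$ is $0$.

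For the first identity I would split $\mathrm{im}h\oplus\mathrm{im}h$ into its diagonal and anti-diagonal copies of $\mathrm{im}h$: any $(u,v)$ equals $(x,-x)+(b,b)$ with $x=\frac12(u-v)$ and $b=\frac12(u+v)$ in $\mathrm{im}h$, and these two summands are orthogonal. The diagonal copy $\{(b,b):b\in\mathrm{im}h\}$ is exactly $H_3$, while the anti-diagonal copy $\{(x,-x):x\in\mathrm{im}h\}$ splits orthogonally as $H_1\oplus H_2$ via $\mathrm{im}h=K\oplus M$. Mutual orthogonality of $H_1,H_2,H_3$ is then immediate: $\langle(y,-y),(b,b)\rangle=0$ separates $H_3$ from the other two, and $K\perp M$ separates $H_1$ from $H_2$.

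For the second identity the orthogonality checks are equally short. For $a\in K$ one has $pa=qa=0$, hence $(1-p)a=(1-q)a=a$; this yields $\langle(a,-a),(pw,qz)\rangle=\langle pa,w\rangle-\langle qa,z\rangle=0$ and $\langle(a,-a),((1-p)b,(1-q)b)\rangle=\langle a,b\rangle-\langle a,b\rangle=0$, so $H_1\perp\tilde H_2$ and $H_1\perp\tilde H_3$; and $p(1-p)=q(1-q)=0$ gives $\tilde H_2\perp\tilde H_3$. The substantive point — which I expect to be the main obstacle — is to show $H_1+\tilde H_2+\tilde H_3=\mathrm{im}h\oplus\mathrm{im}h$. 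Given $(u,v)$, decompose $u=u_0+u_1$ and $v=v_0+v_1$ along $\mathrm{im}h=K\oplus M$, and set $a=\frac12(u_0-v_0)\in K$ and $b_0=\frac12(u_0+v_0)\in K$. One then looks for $b_1\in M$ with $(1-p)b_1=(1-p)u_1$ and $(1-q)b_1=(1-q)v_1$; since $\ker(1-p)=\mathrm{im}p$ and $\ker(1-q)=\mathrm{im}q$, this asks for $b_1\in(u_1+\mathrm{im}p)\cap(v_1+\mathrm{im}q)$, a nonempty set precisely because $u_1-v_1\in M=\mathrm{im}p+\mathrm{im}q$, and any such $b_1$ automatically lies in $M$. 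Taking $w=u_1$, $z=v_1$ and $b=b_0+b_1\in\mathrm{im}h$, and using $(1-p)b_0=b_0$ and $(1-q)b_0=b_0$, one checks directly that $(u,v)=(a,-a)+(pw,qz)+((1-p)b,(1-q)b)$. Finally $H_1$ and $\tilde H_2$ are visibly closed, while $\tilde H_3$ is the image of the operator $b\mapsto((1-p)b,(1-q)b)$ on $\mathrm{im}h$, which is injective (its kernel is $\mathrm{im}p\cap\mathrm{im}q=0$) and bounded below, again by the positive-angle property; together with the pairwise orthogonality this upgrades the algebraic identity just obtained to the asserted orthogonal Hilbert-space direct sum.
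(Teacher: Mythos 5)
Your argument follows essentially the same route as the paper's: both proofs hinge on the splitting $\mathrm{im}h=\bigl(\mathrm{im}h\cap\mathrm{im}(1-p)\cap\mathrm{im}(1-q)\bigr)\oplus\bigl(\mathrm{im}p+\mathrm{im}q\bigr)$ and then reduce the two identities to elementary algebra plus the easy orthogonality checks. Your treatment of the second identity (solving $(1-p)b_1=(1-p)u_1$, $(1-q)b_1=(1-q)v_1$ via the coset intersection $(u_1+\mathrm{im}p)\cap(v_1+\mathrm{im}q)$) is a slightly more constructive variant of the paper's argument, which first shows $H_1+\tilde H_2+H_3$ is all of $\mathrm{im}h\oplus\mathrm{im}h$ and then rewrites $(pw,qv)+(b,b)$ as $(p(w+b),q(v+b))+((1-p)b,(1-q)b)$; both computations are correct, and your separate closedness argument for $\tilde H_3$ is in fact automatic once mutual orthogonality and algebraic spanning are established.

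The one step I would not let pass as written is the appeal to Pooya--Wang for the positive-angle fact $\|pq\|<1$, i.e.\ the closedness of $\mathrm{im}p+\mathrm{im}q$. Their paper concerns the free products $\bZ_m*\bZ_n$, so nothing there covers the amalgamated case; you would have to prove it in the present setting, e.g.\ by observing that $r$ is central, that $\mathrm{im}h\cong\ell^2(G/\langle r\rangle)=\ell^2(\bZ_{m/d}*\bZ_{n/d})$, and that on this subspace $p,q$ become the canonical averaging projections of that free product, which are free with traces $d/m$ and $d/n$; the standard two-free-projections computation then gives $\|pq\|<1$ exactly when $d/m+d/n<1$. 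This genuinely fails when $m=n=2d$ (for instance $\bZ_4*_{\bZ_2}\bZ_4$, where on $\mathrm{im}h$ one obtains the infinite dihedral pair, for which $\|pq\|=1$ and the sum of ranges is dense but not closed), and that case is allowed by the hypotheses of the lemma, so the closedness cannot simply be invoked. To be fair, the paper's own proof carries the same hidden assumption --- its opening assertion that $\ell^2(G)=\bigl(\mathrm{im}(1-p)\cap\mathrm{im}(1-q)\bigr)\oplus\bigl(\mathrm{im}p+\mathrm{im}q\bigr)$ is ``clear'' is precisely the closedness statement --- so you have correctly isolated the analytic input; but as a proof of the lemma under its stated hypotheses, this step is a gap: it needs either an actual argument in the amalgamated setting (valid when $d/m+d/n<1$) or an additional hypothesis excluding $m=n=2d$.
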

  \begin{proof}
  	First, we clearly have the orthogonal decomposition
  	\begin{equation*}
  		\ell^2(G)=\left(\mathrm{im}(1-p)\cap \mathrm{im}(1-q) \right) \oplus \left(\mathrm{im}p + \mathrm{im}q\right).
  	\end{equation*}
    Then based on the commutation relations
    $$(1-p)h=h(1-p),\quad (1-q)h=h(1-q), \quad ph=hp=p,\quad qh=hq=q,$$
    which can be checked directly, we obtain
    \begin{equation}
  		\label{s8}
  		\mathrm{im}h=h\ell^2(G)=\left(\mathrm{im}h \cap \mathrm{im}(1-p)\cap \mathrm{im}(1-q) \right)\oplus \left(\mathrm{im}p + \mathrm{im}q\right).
  	\end{equation}
  Hence, a direct verification shows the first conclusion of decomposition: 
  $$\mathrm{im}h \oplus \mathrm{im}h = H_1\oplus H_2\oplus H_3.$$

  Next, regarding the second decomposition of $\mathrm{im}h \oplus \mathrm{im}h$, we consider an arbitrary element of $H_1 + \tilde{H_2} + H_3$, which can be represented by
  	\begin{equation*}
  		\left[\begin{matrix}
  			a+pw+b \\
  			-a+qv+b
  		\end{matrix}\right],
  	\end{equation*}
  for some $a\in \mathrm{im}h \cap \mathrm{im}(1-p) \cap \mathrm{im}(1-q)$, $b\in \mathrm{im}h$ and $v,w\in \ell^2(G)$. 
  By the equality
  \begin{equation}
  \label{H1tH2H3}
  		\left[\begin{matrix}
  			a+pw+b \\
  			-a+qv+b
  		\end{matrix}\right]=\left[\begin{matrix}
  			a+pw+b \\
  			(a+pw+b)-(2a+pw-qv)
  		\end{matrix}\right],
  	\end{equation}
    we observe that when fixing $a$ and $w$, the term $a+pw+b$ covers the entire $\mathrm{im}h$ as $b$ ranges through $\mathrm{im}h$. 
    Meanwhile, as $a$, $w$, and $v$ vary over their respective domains, $2a+pw-qv$ also covers the whole $\mathrm{im}h$ by equality (\ref{s8}). 
    Therefore, we obtain elements in the form of (\ref{H1tH2H3}) covers
    \begin{equation*}
  		\left[\begin{matrix}
  			x \\
  			x-y
  		\end{matrix}\right],
  	\end{equation*} 
    for arbitrary $x,y\in \mathrm{im}h$, which implies
  	\begin{equation*}
  		\mathrm{im}h \oplus \mathrm{im}h = H_1 + \tilde{H_2} + H_3.
  	\end{equation*}
    Furthermore, since every element of $H_1 + \tilde{H_2} + H_3$ satisfies
  	\begin{equation*}
  		\left[\begin{matrix}
  			a \\
  			-a
  		\end{matrix}\right] + \left[\begin{matrix}
  			pw \\
  			qv
  		\end{matrix}\right] + \left[\begin{matrix}
  			b \\
  			b
  		\end{matrix}\right]=\left[\begin{matrix}
  			a \\
  			-a
  		\end{matrix}\right] + \left[\begin{matrix}
  			p(w+b) \\
  			q(v+b)
  		\end{matrix}\right] + \left[\begin{matrix}
  			(1-p)b \\
  			(1-q)b
  		\end{matrix}\right] \in H_1 + \tilde{H_2} + \tilde{H_3},
  	\end{equation*}
    for some $a\in \mathrm{im}h \cap \mathrm{im}(1-p) \cap \mathrm{im}(1-q)$, $b\in \mathrm{im}h$ and $v,w\in \ell^2(G)$, we infer that
  	\begin{equation*}
  		\mathrm{im}h \oplus \mathrm{im}h = H_1 + \tilde{H_2} + \tilde{H_3}.
  	\end{equation*}
  	Finally, it is straightforward to check that $H_1,\tilde{H_2}, \tilde{H_3}$ are mutually orthogonal, so we conclude that
  	\begin{equation*}
  		\mathrm{im}h \oplus \mathrm{im}h = H_1 \oplus \tilde{H_2} \oplus \tilde{H_3}.
  	\end{equation*}
    This finishes the proof.
  \end{proof}
  
  Denote by $P_{H_i}$ and $P_{\tilde{H_i}}$ the orthogonal projections from the Hilbert space $\ell^2(G)^{\oplus 2}$ onto subspaces $H_i$ and $\tilde{H_i}$, respectively, where $i\in \{2,3\}$.
  The following remark shows that the projections $P_{H_i}$ and $P_{\tilde{H_i}}$ all belong to the $C^*$-subalgebra $\mathrm{M}_{2}(\mathrm{C^*_{r}}(G))$, and hence they define classes in $\mathrm{K_0(C^*_r}(G))$. 

  \begin{remark}
  \label{remark}
      We first observe that 
      $P_{H_3}=T=\left[\begin{matrix}
  		\frac{h}{2} & \frac{h}{2} \\
  		\frac{h}{2} & \frac{h}{2}
  	\end{matrix}\right]\in \mathrm{M}_{2}(\mathrm{C^*_{r}}(G))$.
    Furthermore, since $T$ is unitarily equivalent to the projection $\left[\begin{matrix}
  		h & 0 \\
  		0 & 0
  	\end{matrix}\right]\in \mathrm{M}_{2}(\mathrm{C^*_{r}}(G))$,
    we infer that
  	\begin{equation}
        \label{H3}
  		[P_{H_3}]=[T]=[h]\in \mathrm{K_0(C^*_r}(G)).
  	\end{equation}
    In subsequent arguments, to simplify notations, we use $\mathrm{diag}(\cdot,\cdot)$ to denote a $2\times 2$ diagonal matrix.
    Then by Lemma \ref{lemma1}, we obtain
    $$
    P_{H_2}=\mathrm{diag}(h,h)-P_{H_1}-P_{H_3}\in \mathrm{M}_{2}(\mathrm{C^*_{r}}(G)).
    $$
    Next, by the fact that
    \begin{equation*}
        P_{\tilde{H_2}}= \mathrm{diag}(p,q)\in \mathrm{M}_{2}(\mathrm{C^*_{r}}(G)),
    \end{equation*}
    we also obtain
    $$
    P_{\tilde{H_3}}=\mathrm{diag}(h,h)-P_{H_1}-P_{\tilde{H_2}}\in \mathrm{M}_{2}(\mathrm{C^*_{r}}(G)),
    $$
    with the identity in $K$-theory:
    \begin{equation}
    \label{PtH2}
        [P_{\tilde{H_2}}]= [p]+[q]\in \mathrm{K_0(C^*_r}(G)).
    \end{equation}
  \end{remark}

  The next lemma establishes an identification between $K$-classes $[P_{H_3}]$ and $[P_{\tilde{H_3}}]\in \mathrm{K_0(C^*_r}(G))$.
  \begin{lemma}
  	\label{lemma2}
  	\begin{equation*}
  		[P_{H_3}]=[P_{\tilde{H_3}}] \in \mathrm{K_0(C^*_r}(G)).
  	\end{equation*}
  \end{lemma}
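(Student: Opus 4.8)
The plan is to exhibit an explicit partial isometry (or a path of invertibles) in a matrix algebra over $\mathrm{C^*_r}(G)$ implementing the equivalence of $P_{H_3}$ and $P_{\tilde{H_3}}$, mimicking the strategy already used in Lemma \ref{lemma:H1}. Recall $H_3 = \{(b,b)^{\mathsf T} : b \in \mathrm{im}h\}$ and $\tilde{H_3} = \{((1-p)b, (1-q)b)^{\mathsf T} : b \in \mathrm{im}h\}$. The natural map $H_3 \to \tilde{H_3}$ sending $(b,b)^{\mathsf T}$ to $((1-p)b,(1-q)b)^{\mathsf T}$ is given by $\mathrm{diag}(1-p, 1-q)$ restricted appropriately, but this is not injective on $H_3$ in general (its kernel consists of $b$ with $pb = b$ and $qb = b$, i.e.\ $b \in \mathrm{im}p \cap \mathrm{im}q$). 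So instead I would go through a third space. Using the orthogonal decomposition \eqref{s8}, namely $\mathrm{im}h = \left(\mathrm{im}h \cap \mathrm{im}(1-p) \cap \mathrm{im}(1-q)\right) \oplus \left(\mathrm{im}p + \mathrm{im}q\right)$, write $b = a' + c$ with $a' \in \mathrm{im}h \cap \mathrm{im}(1-p)\cap\mathrm{im}(1-q)$ and $c \in \mathrm{im}p + \mathrm{im}q$. On the first summand $(1-p)$ and $(1-q)$ act as the identity, so the map behaves well there; the subtlety is confined to $\mathrm{im}p + \mathrm{im}q$.

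First I would observe, as in Remark \ref{remark}, that $P_{H_3} = \mathrm{diag}(h,h) - P_{H_1} - P_{H_2}$ and $P_{\tilde H_3} = \mathrm{diag}(h,h) - P_{H_1} - P_{\tilde H_2}$, so it suffices to show $[P_{H_2}] = [P_{\tilde H_2}] \in \mathrm{K_0(C^*_r}(G))$, and by \eqref{PtH2} this amounts to computing $[P_{H_2}]$. The space $H_2 = \{(c,-c)^{\mathsf T} : c \in \mathrm{im}p + \mathrm{im}q\}$ is visibly isomorphic as a Hilbert space to $\mathrm{im}p + \mathrm{im}q$ via $c \mapsto \frac{1}{\sqrt 2}(c,-c)^{\mathsf T}$, so $[P_{H_2}]$ equals the class of the projection onto $\mathrm{im}p + \mathrm{im}q$ inside a single copy of $\ell^2(G)$. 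Thus the lemma reduces to the identity $[P_{\mathrm{im}p + \mathrm{im}q}] = [p] + [q] \in \mathrm{K_0(C^*_r}(G))$, where $P_{\mathrm{im}p + \mathrm{im}q}$ is the orthogonal projection onto the (closed) subspace $\mathrm{im}p + \mathrm{im}q \subseteq \ell^2(G)$.

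Next I would prove that reduction. The subspaces $\mathrm{im}p$ and $\mathrm{im}q$ intersect in $\mathrm{im}p \cap \mathrm{im}q = \mathrm{im}(pq)$ (one checks $pq = qp$ is a projection here, using $ph = p$, $qh = q$ and that $p,q$ both commute with $h$; more directly $pq$ is the projection onto the constants supported on the common subgroup pattern). I would then use the standard $2\times 2$ trick: the projection onto $\mathrm{im}p + \mathrm{im}q$ is Murray–von Neumann equivalent to $p \oplus q$ minus the projection onto the intersection, i.e.\ in $\mathrm{K}_0$ one has $[P_{\mathrm{im}p+\mathrm{im}q}] = [p] + [q] - [pq]$. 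So I must still show $[pq] = 0$. For this, note $pq$ is the averaging projection over a finite cyclic subgroup structure, but more to the point: in our setting the common amalgamated subgroup $\bZ_d$ is generated by $r$, and one computes directly that $pq = p\cdot q$ with $ph = hp = p$ forces $\mathrm{im}p \cap \mathrm{im}q$ to be the rank-related averaging over a \emph{finite} set contained in finite subgroups; the class $[pq]$ vanishes in $\mathrm{K}_0(\mathrm{C^*_r}(G))$ because $pq$ is a finite-propagation projection supported on a finite subgroup $\bZ_d$, and $\mathrm{K}_0$ of the group $C^*$-algebra of a finite cyclic group maps trivially into $\mathrm{K}_0(\mathrm{C^*_r}(G))$ under the relevant inclusion only in the non-trivial-rep components—hmm, this needs care. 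Actually the cleanest route: $pq = \frac{1}{mn}\sum_{i,j}s^it^j$, and since $s^{m/d} = t^{n/d} = r$, one verifies $pq = h \cdot pq = pq \cdot h$ and in fact $pq$ is the projection $\frac{1}{[G:\,?]}\sum_{g \in K}g$ for a finite subgroup $K \le G$; but $G$ contains no such large finite subgroup unless it reduces to $h$. I expect the correct statement is $\mathrm{im}p \cap \mathrm{im}q = 0$ when $d < m$ and $d < n$ (the hypotheses!), because a nonzero vector fixed by both $s$ and $t$ would be fixed by all of $G$, impossible for infinite $G$; here "fixed by $p$" only means averaged, not fixed, so one argues: $pv = v$ and $qv = v$ imply $sv = v$ (as $p$ is the average over $\langle s\rangle$ and $pv=v$ forces $v$ constant on $\langle s\rangle$-cosets... actually $pv = v \Rightarrow sv = v$ is false in general but $pv=v$ does imply $s^ipv = pv$). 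Let me instead just say: $pq = qp$ is a projection, and $p q = \frac d m h f(s)\cdot \frac d n h g(t) = \frac{d^2}{mn}h f(s)g(t)$; since $f(s)g(t)$ averages over a finite set and $h$ is the average over $\langle r \rangle$, while $G$ is infinite, a short computation shows $\mathrm{im}(pq)=0$ precisely when $\langle s\rangle \cap \langle t\rangle = \langle r\rangle$ is proper in both, which is our hypothesis $d \ne m$, $d \ne n$.

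The hard part will be pinning down $[pq] = 0$ (equivalently $\mathrm{im}p\cap\mathrm{im}q = 0$) rigorously and, more importantly, upgrading the Hilbert-space isomorphisms above to genuine $\mathrm{C^*_r}(G)$-module equivalences so that the $\mathrm{K}_0$ identities are legitimate — exactly the kind of bookkeeping with partial isometries $V$ that Lemma \ref{lemma:H1} carries out, and which I would replicate here by building an explicit invertible $V \in \mathrm{M}_2(\mathrm{C^*_r}(G))$ with $V P_{H_3} V^{-1} = P_{\tilde H_3}$ using the factorizations $1-p = k_1(1-s^{-1})$, $1-q = l_1(1-t^{-1})$ from \eqref{factorize1}–\eqref{factorize2}. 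Once that invertible is in hand the conclusion $[P_{H_3}] = [P_{\tilde H_3}]$ is immediate, and combined with Remark \ref{remark} it also yields the relation $[P_{H_2}] = [p] + [q] - [P_{H_1}] - \big([h] - [P_{H_1}] - ([p]+[q])\big)$ that feeds into the proof of Theorem \ref{main th}.
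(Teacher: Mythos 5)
Your reduction of the lemma to $[P_{H_2}]=[P_{\tilde H_2}]$, i.e.\ to $[P_{\overline{\mathrm{im}\,p+\mathrm{im}\,q}}]=[p]+[q]$ in $\mathrm{K_0}(\mathrm{C^*_r}(G))$, is legitimate (the scalar unitary $\frac{1}{\sqrt 2}\bigl[\begin{smallmatrix}1&1\\1&-1\end{smallmatrix}\bigr]$ carries $H_2$ onto $0\oplus(\mathrm{im}\,p+\mathrm{im}\,q)$, and Remark \ref{remark} puts all the relevant projections in $\mathrm{M}_2(\mathrm{C^*_r}(G))$), but it does not make the problem easier, and the step you propose to finish with does not close. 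Kaplansky's parallelogram law $p\vee q-q\sim p-p\wedge q$ is witnessed by the partial isometry from the polar decomposition of $(1-q)p$, which a priori lies only in the group von Neumann algebra $L(G)$; to turn it into an identity in $\mathrm{K_0}(\mathrm{C^*_r}(G))$ you must show that this partial isometry (or a substitute) lies in matrices over $\mathrm{C^*_r}(G)$, e.g.\ by proving a spectral gap at $0$ for $p(1-q)p$ (equivalently, that $\mathrm{im}\,p+\mathrm{im}\,q$ is closed, equivalently $\|pq\|<1$ given trivial intersection). You never address this point, and your concluding sentence---that the ``hard part'' is ``building an explicit invertible $V\in\mathrm{M}_2(\mathrm{C^*_r}(G))$''---is precisely the entire content of the lemma: the paper's proof (following \cite[Lemma 4.8]{Pooya-Wang}) consists exactly of constructing such an invertible $U\in\mathrm{M}_2(\mathrm{C^*_r}(G))$, assembled from invertible operators $u:\tilde{H_2}\to H_2$ and $v:H_3\to\tilde{H_3}$, and checking that these remain invertible in the amalgamated setting. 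As written, your proposal defers the decisive construction, so it is a plan rather than a proof.

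There are also concrete errors along the way. In $G=\bZ_m*_{\bZ_d}\bZ_n$ the averaging projections do not commute: $pq\neq qp$ and $pq$ is not idempotent (already for $\bZ_2*\bZ_3$), so $pq$ is not ``the projection onto $\mathrm{im}\,p\cap\mathrm{im}\,q$''; the relevant object is $p\wedge q$, and the digression about $[pq]=0$ via finite subgroups is not meaningful. Fortunately the fact you actually need is easy and you half-saw it: if $p\xi=\xi$ then $s\xi=sp\xi=p\xi=\xi$ because $sp=p$, and likewise $q\xi=\xi$ forces $t\xi=\xi$, so $\xi$ is invariant under all of $G$ and hence $\xi=0$ since $G$ is infinite; thus $\mathrm{im}\,p\cap\mathrm{im}\,q=0$ and $p\wedge q=0$. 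But this only removes the $[p\wedge q]$ term from your formula; the missing idea remains the passage from a Murray--von Neumann equivalence in $L(G)$ to one implemented inside $\mathrm{M}_2(\mathrm{C^*_r}(G))$, which is what Lemma \ref{lemma2} is really about.
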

  \begin{proof}
  The proof is similar to that of the case $G=\bZ_m*\bZ_n$, which was verified in \cite[Lemma 4.8]{Pooya-Wang}.
  The main idea is to construct an invertible operator $U\in \mathrm{M}_{2}(\mathrm{C^*_{r}}(G))$ to implement the conjugacy between $P_{H_3}$ and $P_{\tilde{H_3}}$. Moreover, $U$ is based on two invertible operators $u:\tilde{H_2}\to H_2$ and $v:H_3\to \tilde{H_3}$, which remain invertible in our broader case of $G=\bZ_m*_{\bZ_d}\bZ_n$.
  \end{proof}
  With all the ingredients, we are now ready to present the proof for Theorem \ref{main th}.
  \begin{proof}[Proof of Theorem \ref{main th}]
    As Remark \ref{remark} shows that $P_{\tilde{H_2}}$ and $P_{\tilde{H_3}}$ are both projections over $\mathrm{C^*_{r}}(G)$, by Lemma \ref{lemma1}, we infer that
     \begin{equation*}
  		[P_{H_1}]=2[h]-[P_{\tilde{H_2}}]-[P_{\tilde{H_3}}]\in \mathrm{K_0(C^*_r}(G)).
  	\end{equation*}
    The above equality, together with formulas (\ref{H3}) and (\ref{PtH2}), and Lemma \ref{lemma2}, implies that
  	\begin{equation*}
  		[P_{H_1}]=2[h]-[p]-[q]-[h]=[h]-[p]-[q]\in \mathrm{K_0(C^*_r}(G)).
  	\end{equation*}
    Therefore, by Lemma \ref{lemma:H1}, we deduce that
    \begin{equation*}
  		[p_1]=[P_{H_1}]=[h]-[p]-[q]\in \mathrm{K_0(C^*_r}(G)),
  	\end{equation*}
    which completes the proof.
  \end{proof}
   Applying delocalized traces $\tau_{\langle g\rangle}$ to both sides of the formula for $[p_1]$ in Theorem \ref{main th}, we derive the first delocalized $\ell^2$-Betti numbers for $G=\bZ_m*_{\bZ_d}\bZ_n$ as presented in the following corollary. Moreover, the delocalized $\ell^2$-Betti numbers for $G$ vanish in degrees other than 1 due to Remark \ref{remark: p1 only nonvaish}.
  \begin{corollary}
The delocalized $\ell^2$-Betti numbers for $G=\bZ_m*_{\bZ_d}\bZ_n$, with $d|m$, $d|n$, $d\neq m$, $d\neq n$, $m \geq 2$ and $n\geq 3$, are:
        \begin{equation*}
            \beta^{(2)}_{1, \langle{g}\rangle}(G) =\frac{|\langle g \rangle \cap \bZ_d|}{d}-\frac{|\langle g \rangle \cap \bZ_m|}{m}-\frac{|\langle g \rangle \cap \bZ_n|}{n},\quad g\in G,
        \end{equation*}
		and $\beta ^{(2)}_{k, \langle{g}\rangle}(G) = 0$ for $k \neq 1,\ g\in G$.
        Especially, $\beta^{(2)}_{1, \langle{g}\rangle}(G)=\frac{1}{d}-\frac{1}{m}-\frac{1}{n}$.
    \end{corollary}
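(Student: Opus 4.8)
The plan is to obtain the formula by pairing the identity $[p_1]=[h]-[p]-[q]$ of Theorem \ref{main th} with the delocalized trace $\tau_{\langle g\rangle}$ and evaluating each of the three resulting terms directly. First I would recall why this pairing is legitimate: the groups $G=\bZ_m*_{\bZ_d}\bZ_n$ are virtually free, hence hyperbolic, so by Puschnigg's theorem \cite{Puschnigg} the delocalized trace $\tau_{\langle g\rangle}$ extends continuously from $\ell^1(G)$ to a smooth (dense, holomorphically closed) subalgebra $\mathcal{S}\subseteq\mathrm{C^*_r}(G)$; such an extension induces an additive map $\tau_{\langle g\rangle}\colon\mathrm{K_0(C^*_r}(G))\to\mathbb{C}$, whence $\beta^{(2)}_{1,\langle g\rangle}(G)=\tau_{\langle g\rangle}([p_1])=\tau_{\langle g\rangle}([h])-\tau_{\langle g\rangle}([p])-\tau_{\langle g\rangle}([q])$. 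Since $h,p,q$ are genuine projections lying in $\bZ G\subseteq\ell^1(G)$, the three terms on the right are just the values of the original trace $\tau_{\langle g\rangle}$ on these group-ring elements.

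Next I would compute those values. Writing $p=\tfrac{1}{m}\sum_{i=0}^{m-1}s^i$ and using linearity of $\tau_{\langle g\rangle}$ together with $\tau_{\langle g\rangle}(x)=1$ if $x\in\langle g\rangle$ and $0$ otherwise (for $x\in G$), one gets $\tau_{\langle g\rangle}(p)=\tfrac{1}{m}\#\{\,0\le i\le m-1:\ s^i\in\langle g\rangle\,\}$. As $s$ has order $m$, the elements $s^0,\dots,s^{m-1}$ are pairwise distinct and exhaust the cyclic subgroup $\bZ_m=\langle s\rangle$, so this count collapses to $|\langle g\rangle\cap\bZ_m|$, giving $\tau_{\langle g\rangle}(p)=\tfrac{|\langle g\rangle\cap\bZ_m|}{m}$. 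The identical argument applied to $q$ and to $h$ (using that $t$ has order $n$ and $r$ has order $d$) yields $\tau_{\langle g\rangle}(q)=\tfrac{|\langle g\rangle\cap\bZ_n|}{n}$ and $\tau_{\langle g\rangle}(h)=\tfrac{|\langle g\rangle\cap\bZ_d|}{d}$.

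Assembling these three identities gives the claimed formula $\beta^{(2)}_{1,\langle g\rangle}(G)=\tfrac{|\langle g\rangle\cap\bZ_d|}{d}-\tfrac{|\langle g\rangle\cap\bZ_m|}{m}-\tfrac{|\langle g\rangle\cap\bZ_n|}{n}$; specializing to $g=1$, where $\langle 1\rangle=\{1\}$ meets each of $\bZ_d,\bZ_m,\bZ_n$ in the single element $1$, produces $\beta^{(2)}_{1,\langle 1\rangle}(G)=\tfrac{1}{d}-\tfrac{1}{m}-\tfrac{1}{n}$. Finally, the vanishing $\beta^{(2)}_{k,\langle g\rangle}(G)=0$ for $k\neq 1$ is immediate from Remark \ref{remark: p1 only nonvaish}: the higher Kazhdan projections $p_k$ vanish for $k\neq1$, so $[p_k]=0$ in $\mathrm{K_0(C^*_r}(G))$ and its pairing with $\tau_{\langle g\rangle}$ is zero. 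As a consistency check one can specialize to $G=\mathrm{SL}(2,\bZ)=\bZ_4*_{\bZ_2}\bZ_6$ and recover Corollary \ref{corollary:SL2}.

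I do not anticipate a genuine obstacle here: the only non-formal point is the well-definedness of the pairing on $\mathrm{K_0(C^*_r}(G))$, which is supplied by hyperbolicity together with Puschnigg's extension, and the remainder is the elementary bookkeeping of intersecting a single conjugacy class with the three finite cyclic subgroups $\bZ_d=\langle r\rangle$, $\bZ_m=\langle s\rangle$, $\bZ_n=\langle t\rangle$. The one place warranting mild care is confirming that the count of $\{s^i\}_{i=0}^{m-1}$ lying in $\langle g\rangle$ is exactly $|\langle g\rangle\cap\bZ_m|$ with no multiplicity issue — which holds precisely because these powers are distinct — and likewise for $t$ and $r$.
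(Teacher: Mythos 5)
Your proposal is correct and follows essentially the same route as the paper, which simply pairs the identity $[p_1]=[h]-[p]-[q]$ of Theorem \ref{main th} with $\tau_{\langle g\rangle}$ (well-defined via Puschnigg's extension) and counts the intersection of $\langle g\rangle$ with the cyclic subgroups $\langle r\rangle$, $\langle s\rangle$, $\langle t\rangle$, with the vanishing in degrees $k\neq 1$ coming from Remark \ref{remark: p1 only nonvaish}. Your write-up merely makes explicit the bookkeeping that the paper leaves implicit.
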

   In particular, when $m=4$, $n=6$ and $d=2$, we obtain the results associated with $G=\mathrm{SL}(2,\bZ)=\bZ_4*_{\bZ_2}\bZ_6$, as presented in Corollary \ref{corollary:SL2} in Section \ref{section1}.
  
  

  	
  	
	
	
	\bibliographystyle{alpha}
	\bibliography{mybib}

	\vspace{2em}
	\begin{minipage}[t]{0.45\linewidth}  
        
        \small 
		Baiying Ren \\ Research Center of Operator Algebras \\
		East China Normal University\\
		Shanghai 200241, China \\
		{\footnotesize 52275500020@stu.ecnu.edu.cn}
	\end{minipage}

\end{document}